\def\z{{\bf z}}
\newcommand{\R}{{\mathbb R}}
\def\1{\raisebox{2pt}{\rm{$\chi$}}}
 \newtheorem{thm}{Theorem}[section]
 \newtheorem{lem}[thm]{Lemma}
 \newtheorem{prop}[thm]{Proposition}
 \newtheorem{defn}[thm]{Definition}
 \newtheorem{rem}[thm]{Remark}
 \newtheorem{ex}{Example}
 \numberwithin{equation}{section}
\begin{document}

\begin{center}
{\LARGE\textbf{
   An augmented Lagrangian model for signal segmentation 
  }}
\end{center}
\vspace{7ex}
\begin{center}
{\sc Salvador Moll}\footnotemark[2]\\
Department d'An\`{a}lisi Matem\`{a}tica, Universitat de Val\`{e}ncia\\
C/Dr. Moliner, 50, Burjassot, Spain\\
{\ttfamily j.salvador.moll@uv.es}
\vspace{2ex}

{\sc Vicent Pallard\'o}\\
Universitat de Val\`encia\\ C. Dr. Moliner, 50, Burjassot, Spain \\ {\ttfamily vicentpallardojulia@gmail.com}
\end{center}
\vspace{10ex}

\noindent
{\bf Abstract.}
In this paper, we provide a new insight to the two--phase signal segmentation problem. We propose an augmented Lagrangian variational model based on Chan--Vese's original one. By using both energy methods and PDE methods, we show, in the one dimensional case, that the set of minimizers to the proposed functional contains only binary functions and it coincides with the set of minimizers to Chan--Vese's one. This fact allows us to obtain two important features of the minimizers  as a byproduct of our analysis. First of all, for  a piecewise constant initial signal, the  jump set of any minimizer is a subset of the jump set of the given signal. Secondly, all of the jump points of the minimizer belong to the same level set of the signal, in a multivalued sense. This last property permits to design a  trivial algorithm for computing the minimizers.

\footnotetext[0]{
{\bf Keywords:} 
Segmentation, signal processing, total variation, minimizers, one dimensional

$\empty^*$\,AMS Subject Classification
	35G60, 35Q68, 35J92, 49J10  

$\empty^\dag$\, {This author is partially supported by the Spanish MCIU and FEDER project PGC2018-094775-B-I00 and by Conselleria d'Innovaci\'o, Universitats, Ci\`encia i Societat Digital, project AICO/2021/223.}}

\section{Introduction}\label{}

Segmentation is the task of partitioning an object into its constituent parts. In signal or image processing, it consists in decomposing the domain $\Omega$ of a given input: a signal (an interval in which case $\Omega=[a,b]\subset \R$) or an image ($\Omega\subset\R^2$) into some regions of interest. In the particular case of two--phase segmentation, the aim is to find an optimal partition into two disjoint subsets, the foreground domain $\Omega_F$ and the background domain $\Omega_B$ such that $\Omega=\Omega_F \cup \Omega_B$.\\

After the seminal work by Mumford and Shah \cite{Mumford-Shah}, in which the authors introduced a celebrated variational model for image segmentation, Chan and Vese rewrote it in the two--phase framework \cite{Chan-Vese}. They propose to obtain the optimal partition by minimizing the following energy functional
\begin{equation}\label{chan-vese}
	\min_{E,c_1,c_2} {\rm Per} (E;\Omega)+\lambda_1 \int_E (c_1-f)^2\, dx+\lambda_2\int_{\Omega\setminus E} (c_2-f)^2\,dx
\end{equation}
among all sets of finite perimeter $E\subset \Omega$ and all constants $c_1,c_2\in [0,1]$, for some given parameters $\lambda_1,\lambda_2\geq 0$. From now on, we don't distinguish between the weights of the foreground and background and thus we take $\lambda_1=\lambda_2=\lambda$. A minimizer $E\subset\Omega$ can be considered as the foreground domain and $\Omega\setminus E$ as the background domain. In this case, the constants $c_1$ and $c_2$ turn out to be, the average of $f$ in $E$ and the average in $\Omega\setminus E$, respectively. The authors proposed in \cite{Chan-Vese} an iterative two step algorithm for finding the minimizers of the energy based on the level set formulation developed by Osher and Sethian \cite{Osher-Sethian}. Basically, after initialization of the constants, the first step consists in finding the corresponding minimizing set with fixed constants as a steady state solution of the correspondent $L^2$- gradient flow of the functional associated to the level set formulation. Then, one recomputes the constants and cames back to the first step until convergence has been reached.\\

The main problem with this algorithm is that the energy functional is not convex. Therefore, the gradient descent scheme is prone to get stuck at critical points other than global minima. This issue was fixed by Chan- Esedoglou and Nikolova, who proved that minimizers to Chan-Vese's level set functional with fixed constants (i.e. solutions to Step 2) are solutions to the following constraint convex energy minimization problem (and viceversa):
\begin{equation}
	\label{chan-esedoglu-nikolova}
	\min_{\{ 0\leq u\leq 1\}} |Du|(\Omega)+\lambda\int_\Omega u(c_1-f)^2\, dx+(1-u)(c_2-f)^2\, dx.
\end{equation}
Observe that, if the solution $u$ is the characteristic function of a set with finite perimeter; $u=\chi_E$, then, the energy in \eqref{chan-esedoglu-nikolova} coincides with that in \eqref{chan-vese}. There are still two main problems: the main one remains at the nonconvex nature of the original energy functional; i.e. convergence of the algorithm to a global minima is not ensured, and it heavily depends on the initialization. Moreover, it is not known if Chan-Vese's algorithm (with Chan-Esedoglu-Nikolova modification) could lead to non-binary solutions (see \cite{Chan-Esedoglu-Nikolova}).\\

The main objective of this work is to give another approach to original Chan-Vese's minimizers in the easiest possible case, the one dimensional case, which corresponds to signal segmentation. In the context of signal sementation, Chan-Vese's algortihm was already proposed in \cite{Chan-Vese-Multiphase} and has been used in some works (see \cite{Mahmoodi} or \cite{Mahmoodi2}). Our starting point is the functional appearing in Problem \eqref{chan-esedoglu-nikolova}. We aim at minimizing, simultaneously the function $u$ and the constants $c_1,c_2$. In order to do that, we introduce an augmented lagrangian version of the functional, coupled with the constraint $0\leq u\leq 1$. In this new functional, we replace the constants by BV functions while highly penalizing their variation. For any $\varepsilon>0$ we define the functional $F_\varepsilon: (L^2(0,1))^3\to [0,+\infty[$ by letting
\begin{align}\label{EAL}
F_\varepsilon(u,v_1,v_2) =& |Du|(\Omega) + \frac{1}{\varepsilon}\left(|Dv_1|(\Omega) + |Dv_2|(\Omega)\right) + \nonumber\\
			 +&	\lambda\int_\Omega u\left(v_1 - f\right)^2\,dx + (1-u)\left( v_2 - f\right)^2\,dx  + \int_\Omega \mathbb{I}_{[0,1]}(u)\,dx,
\end{align}
where $\mathbb{I}_{[0,1]}(\cdot)$ denotes the indicator function of the interval $[0,1]$; i.e. $$\mathbb{I}_{[0,1]}(x)=\left\{\begin{array}
  {cc} 0 & {\rm if \ } x\in [0,1] \\ +\infty & {\rm otherwise \ }.
\end{array}\right.$$

The second term is implemented for penalizing the variation of the { pair of functions} $(v_1, v_2)$. { Observe that, letting $\varepsilon\to 0$ we are forcing $v_1,v_2$ to become constants}. With this addition, {the} functional $F_\varepsilon$ fails to be convex. However, we can use standard PDE methods to obtain some  features of the set of minimizers via its correspondent system of Euler-Lagrange equations. In particular, we prove the following results in the case that $\Omega$ is an interval of $\R$:
\begin{thm}
  \label{th:constant}
  Let $\varepsilon<{\frac{1}{4\lambda}}$. Then, if  $(u,v_1,v_2)$ is a minimizer of $F_\varepsilon$, then $v_1,v_2$ are constants{.}
\end{thm}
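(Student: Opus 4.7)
The plan is to argue by contradiction. Suppose $(u,v_1,v_2)$ is a minimizer of $F_\varepsilon$ with $V := |Dv_1|(\Omega) > 0$, and construct a competitor in which $v_1$ is replaced by a constant that strictly lowers the energy; the statement for $v_2$ will then follow by the identical argument with $1-u$ in place of $u$.

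First I would reduce to the case $v_1 \in [0,1]$ a.e.\ by truncation: replacing $v_1$ with $\max(\min(v_1,1),0)$ does not increase $|Dv_1|(\Omega)$ (the $1$-Lipschitz truncation is variation-decreasing) and strictly decreases the integrand $u(v_1-f)^2$ wherever $v_1 \notin [0,1]$, using the standing assumption $f \in [0,1]$. Secondly, since the perturbation $v_1 \mapsto v_1+t$ leaves $|Dv_1|(\Omega)$ invariant, minimality and differentiation at $t=0$ yield
\[
\int_\Omega u(v_1-f)\,dx \;=\; 0,
\]
so (assuming $\int_\Omega u\,dx > 0$; otherwise $v_1$ does not enter the fit term and the TV term alone forces it to be constant) the weighted mean
\[
\bar c \;:=\; \frac{\int_\Omega u v_1\,dx}{\int_\Omega u\,dx} \;=\; \frac{\int_\Omega u f\,dx}{\int_\Omega u\,dx}
\]
lies in $[0,1]$.

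I would then compare $v_1$ to the constant $\bar c$. Using $\int_\Omega u(v_1 - \bar c)\,dx = 0$, a direct algebraic expansion gives
\[
\int_\Omega u\bigl[(\bar c - f)^2 - (v_1 - f)^2\bigr] dx \;=\; \int_\Omega u(\bar c - v_1)(\bar c + v_1 - 2f)\,dx.
\]
In one space dimension the essential oscillation of a BV function is bounded by its total variation, hence $|v_1 - \bar c| \le V$ a.e. Combined with the crude triangle inequality $|\bar c + v_1 - 2f| \le |\bar c| + |v_1| + 2|f| \le 4$ and with $u \le 1$, $|\Omega| = 1$, this produces
\[
\left| \int_\Omega u(\bar c - v_1)(\bar c + v_1 - 2f)\,dx \right| \;\le\; 4V,
\]
and therefore
\[
F_\varepsilon(u, \bar c, v_2) - F_\varepsilon(u, v_1, v_2) \;\le\; -\frac{V}{\varepsilon} + 4\lambda V \;=\; V\!\left( 4\lambda - \frac{1}{\varepsilon} \right),
\]
which is strictly negative whenever $\varepsilon < \tfrac{1}{4\lambda}$. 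This contradicts the minimality of $(u,v_1,v_2)$ and forces $V = 0$; the analogous argument with $u$ replaced by $1-u$ and $v_1$ by $v_2$ yields that $v_2$ is constant as well.

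The main delicate point will be the pointwise oscillation bound $|v_1 - \bar c| \le V$: one must pass to a good representative of $v_1 \in BV(\Omega)$ (for instance the left-continuous one), for which the inequality $|v_1(x) - v_1(y)| \le |Dv_1|(\Omega)$ holds. Checking that the truncation step is admissible within the augmented Lagrangian framework and handling the degenerate weights $\int_\Omega u\,dx = 0$ or $\int_\Omega (1-u)\,dx = 0$ are routine but should be addressed explicitly.
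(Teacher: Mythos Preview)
Your argument is correct, but it follows a genuinely different route from the paper's. The paper works through the Euler--Lagrange system: it invokes the characterization of $\partial\Phi$ via a vector field $\boldsymbol{z_{v_1}}\in H^1_0(0,1)$ with $\|\boldsymbol{z_{v_1}}\|_\infty\le 1$ satisfying $(\boldsymbol{z_{v_1}})_x = 2\varepsilon\lambda u(v_1-f)$, proves a lemma that $|\boldsymbol{z_{v_1}}|=1$ $|Dv_1|$-a.e., picks a point $x_1$ where this holds, and then integrates the equation from $0$ to $x_1$ to get $1 = 2\varepsilon\lambda\bigl|\int_0^{x_1} u(v_1-f)\,dx\bigr|\le 4\varepsilon\lambda$, a contradiction.

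Your approach is a pure energy comparison: you build an explicit constant competitor $\bar c$ and show it strictly lowers $F_\varepsilon$. The crucial ingredient that replaces the paper's use of the calibrating field is the one-dimensional oscillation estimate $|v_1-\bar c|\le V$ a.e. This is more elementary --- it bypasses the Anzellotti pairing, the subdifferential decomposition $\partial(\Phi+\Psi)=\partial\Phi+\partial\Psi$, and Lemma~\ref{z1} entirely --- and in fact your first-variation step is not even needed: once $v_1\in[0,1]$ by truncation, any weighted average $\bar c=\frac{\int u v_1}{\int u}$ already lies in $[0,1]\cap[\operatorname{ess\,inf}v_1,\operatorname{ess\,sup}v_1]$, so the bounds $|\bar c|\le 1$ and $|v_1-\bar c|\le V$ follow immediately. (With the sharper estimate $|\bar c+v_1-2f|\le 2$ you could even improve the threshold to $\varepsilon<\tfrac{1}{2\lambda}$.) The paper's PDE route, on the other hand, sets up the machinery that is reused heavily in the proof of Theorem~\ref{th:binary} and in establishing properties $(a)$ and $(b)$, so within the paper it is the more economical choice.

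One small point worth tightening: the truncation step shows only that $(u,T(v_1),v_2)$ is \emph{also} a minimizer, not that $v_1=T(v_1)$; but since $|DT(v_1)|\le|Dv_1|$, either the truncated version already has strictly smaller energy (contradiction) or it is a minimizer with the same positive TV, and your main argument then applies to it. The paper glosses over the same point.
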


{ In order to characterize the first component of the minimizer we need to assume further that the datum is not too oscillatory in the following sense:

\medskip
\noindent{$(H)$}
	${f\in BV(0,1)}$ {satisfies that} for every $c\in(0,1)$,
	$$
	\mathcal{L}^1(\partial\{x\in (0,1)\setminus J_f : f(x) = c  \}) = 0
	$$

{Observe that with this assumption we exclude some pathologies on the data such as having a fat Cantor set as a level set.}}

\begin{thm}
  \label{th:binary} Given $\varepsilon<{\frac{1}{4\lambda}}$, for $f\in BV(\Omega)$ { satisfying $(H)$}, any minimizer $(u,v_1,v_2)$ of $F_\varepsilon$ is independent of $\varepsilon$ and it satisfies that either $u$ is constant or $u(\Omega)\subset\{0,1\}$; i.e. $u$ is a binary function in $BV(\Omega)$.
\end{thm}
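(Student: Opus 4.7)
The $\varepsilon$-independence follows at once from Theorem~\ref{th:constant}: every minimizer $(u,v_1,v_2)$ of $F_\varepsilon$ has $v_1=c_1$, $v_2=c_2$ constants, so $|Dv_i|((0,1))=0$ and $F_\varepsilon$ coincides on this restricted domain with the $\varepsilon$-independent functional
\[
G(u,c_1,c_2):=|Du|((0,1))+\lambda\!\int_0^1\!\bigl[u(c_1-f)^2+(1-u)(c_2-f)^2\bigr]dx,
\]
subject to $0\le u\le 1$. Fix such a minimizer. If $c_1=c_2$, the data term is $u$-independent, so $u$ minimizes $|Du|$ and is therefore constant. Otherwise, by symmetry assume $c_1<c_2$, and set $m:=(c_1+c_2)/2$, $R:=\{x\in(0,1):f(x)=m\}$ and $g(x):=\lambda(c_1-c_2)(c_1+c_2-2f(x))$.

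By the layer-cake identity and coarea formula, $G(u,c_1,c_2)$ equals, up to a constant,
\[
\int_0^1\!\Bigl[|D\chi_{\{u>t\}}|((0,1))+\int_{\{u>t\}}g\,dx\Bigr]\,dt,
\]
so for fixed $(c_1,c_2)$, $u$ is a minimizer iff $E_t:=\{u>t\}$ minimizes $J(E):=|D\chi_E|((0,1))+\int_E g\,dx$ for a.e. $t\in(0,1)$. By submodularity of the perimeter any two such minimizers agree outside $\{g=0\}=R$; moreover, since $g\equiv 0$ on $R$, adding or removing a proper subset of a connected component of $\mathrm{int}(R)$ strictly raises the perimeter while preserving the data, so each minimizer $E$ either contains a given component of $\mathrm{int}(R)$ wholly or avoids it entirely. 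Hence $u$ is $\{0,1\}$-valued a.e. on $(0,1)\setminus R$ and is constant on each component of $\mathrm{int}(R)$. Hypothesis~$(H)$ with $c=m$ gives $\mathcal{L}^1(\partial(R\setminus J_f))=0$, whence $R$ decomposes (modulo a null set) into a countable disjoint union $\bigsqcup_k(a_k,b_k)$ of open intervals on each of which $u\equiv u_k\in[0,1]$.

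It remains to verify that every $u_k\in\{0,1\}$. Assume instead $u_k\in(0,1)$ for some $k$ and perturb $u\to u+\delta\chi_{(a_k,b_k)}$, letting $(c_1,c_2)$ evolve through the self-consistency $c_i=\int u_i f/\int u_i$. Using the identity, valid at self-consistency,
\[
G(u,c_1(u),c_2(u))=|Du|((0,1))+\lambda\!\int_0^1\! f^2\,dx-\lambda\bigl[c_1^2\!\int_0^1\! u\,dx+c_2^2\!\int_0^1\!(1-u)\,dx\bigr],
\]
together with $f\equiv m$ on $(a_k,b_k)$, a direct computation yields: if $u(a_k^-)=u(b_k^+)$ (``same-type''), $|Du|$ varies affinely in $\delta$ with slope $\pm 2$, so $\partial_\delta G|_{0}=\pm 2\ne 0$ and $u_k\in(0,1)$ cannot be an interior minimizer; if $u(a_k^-)\ne u(b_k^+)$ (``mixed''), $|Du|$ is constant in $\delta$ while
\[
\frac{\partial^2}{\partial\delta^2}\!\bigl[c_1^2\!\int u+c_2^2\!\int(1-u)\bigr]\bigg|_{\delta=0}=\frac{(b_k-a_k)^2(c_2-c_1)^2}{2}\!\left(\frac{1}{\int u}+\frac{1}{\int(1-u)}\right)>0,
\]
so $G$ is strictly concave in $\delta$ at $0$. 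In either case $G(\delta)<G(0)$ for small $\delta$ of suitable sign, contradicting minimality. Therefore every $u_k\in\{0,1\}$ and $u$ is binary a.e. on $(0,1)$. The main technical step is the concavity computation: one must carefully track the coupling between the perturbation of $u_k$ and the induced self-consistent variation of $c_1,c_2$, with the vanishing of $g$ on $(a_k,b_k)$ as the essential algebraic ingredient.
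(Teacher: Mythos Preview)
Your overall strategy---reduce to level-set minimization via coarea, localize the non-binary part of $u$ to the set $R=\{f=m\}$, and then kill non-binary plateaux by a self-consistent perturbation---is appealing and partly parallels the paper. However, the argument has a genuine gap at the step where you invoke submodularity.

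\medskip
\textbf{The gap.} The assertion ``by submodularity of the perimeter any two such minimizers agree outside $\{g=0\}=R$'' is false. Take $g=1$ on $(0,\tfrac12)$ and $g=-1$ on $(\tfrac12,1)$, so $R$ is a null set. Then $J(\emptyset)=J((0,1))=0$, and one checks these are both global minimizers of $J$; they disagree on all of $(0,1)\setminus R$. Submodularity only tells you that $E\cup F$ and $E\cap F$ are again minimizers, not that $E\triangle F\subset R$. In the level-set picture this means one can have $u\equiv\tfrac12$ minimizing $G(\cdot,c_1,c_2)$ for such fixed $(c_1,c_2)$, with $E_t=(0,1)$ for $t<\tfrac12$ and $E_t=\emptyset$ for $t>\tfrac12$, both minimizing $J$. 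Of course this particular $u$ is ruled out once you impose the self-consistency $c_i=c_i(u)$ (it forces $c_1=c_2$), but your step~4 does not use self-consistency; as written it is simply wrong. A correct replacement is the weaker statement $\partial^*E_t\subset \overline R\cup J_f$ for each minimizing level set, which yields only that $u$ is \emph{piecewise constant} (not binary) on $(0,1)\setminus R$. The companion claim that ``adding or removing a proper subset of a component of $\mathrm{int}(R)$ \emph{strictly} raises the perimeter'' is also too strong: if the subset abuts an endpoint of the component, the perimeter change can be zero.

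\medskip
\textbf{Why this matters downstream.} Your step~7 is correct and is a nice variant of the paper's Proposition~3.7: the second-variation/concavity computation with the coupled $c_i(u)$ genuinely rules out an interior value $u_k\in(0,1)$ on a plateau where $f\equiv m$. But the computation uses $f\equiv m$ on $(a_k,b_k)$ in an essential way (this is what makes $h'(0)+k'(0)=0$). It therefore says nothing about plateaux with $u_k\in(0,1)$ lying in $(0,1)\setminus R$, and those are exactly the ones left over by the failure of step~4. The paper closes this case differently: via the Euler--Lagrange field $\boldsymbol{z_u}$ it first shows (Theorem~3.4) that any minimizer admits a quasi--piecewise constant competitor $\overline u$ with the same energy, and then (Proposition~3.7), using Remark~3.6 to pin down $\int_a^b g\in\{-2,0,2\}$ across a non-binary step, builds a second competitor $v_\tau$ of equal energy whose self-consistency forces $c_1=c_2$. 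That argument does not require $f\equiv m$ on the step. To repair your proof you would need either an analogue of Remark~3.6 to handle plateaux outside $R$, or to bring self-consistency into your step~4 so that non-binary constants outside $R$ are excluded from the start.
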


With this last result, we can show that the set of minimizers of Chan-Vese's problem \eqref{chan-vese} coincide with the set of minimizers of $F_\varepsilon$ (independent of $\varepsilon$ under the size condition above expressed). As a by-product of our analysis, we obtain two important properties of solutions to \eqref{chan-vese}:

\begin{itemize}
  \item[$(a)$] The jump set of any solution is concentrated in the topological boundary of a sole level set { (in a multivalued sense, see \eqref{jumplevelset} for the proper statement)}.
  \item[$(b)$] If $f$ is piecewise constant, then the jump set of any solution is contained in the jump set of $f$.
  \end{itemize}

These two properties, though quite intuitive in the one dimensional setting, were not known in the literature, to the best of our knowledge. Moreover, property $(a)$ allows to build a trivial algorithm to find the minimizers of Chan-Vese's problem in the one-dimensional case.

\medskip
The plan of the paper is the following one: In Section \ref{sec:system} we obtain the system of PDE's that minimizers to $F_\varepsilon$ satisfy; i.e. the corresponding Euler--Lagrange equations in this non-smooth case. In Section \ref{sec:thm} we prove Theorems \ref{th:constant} and \ref{th:binary}. Section \ref{sec:prop} is devoted to the proof of properties $(a)$ and $(b)$. In Section \ref{sec:algorithm}, we explain the trivial algorithm to compute the minimizers{.}
We finish the paper with some conclusions and with an Appendix, in which we collect the existence of minimizers to $F_\varepsilon$ { as well as the proof of an auxiliary result we need}.\\

{\bf \noindent Notations.} Throughout the paper, $\Omega$ denotes an open bounded set in $\R^N$ {with Lipschitz boundary} and $\mathcal L^N$ denotes the Lebesgue measure in $\R^N$. We denote by $L^p(\Omega)$, $1\leq p\leq \infty$ the Lebesgue space of functions which are integrable with power $p$ with respect to $\mathcal L^N$.  We use the notation $\langle \cdot,\cdot\rangle$ to denote the scalar product between two $L^2$ functions. We denote by $H^1(\Omega)$ the Hilbert space $W^{1,2}(\Omega)$ and by $H_0^1(\Omega)$ the completion in $H^1(\Omega)$ of smooth functions with compact support in $\Omega$.  We use standard notation for functions of bounded variation ($BV$ functions) as in \cite{Ambrosio}. In particular, given $u\in BV(\Omega)$, we write $u_x$, $D^c u$ and $D^j u$ for the absolutely continuous part of the measure $Du$ with respect to $\mathcal L^N$, for the Cantor part of $Du$ and for the jump part of $Du$, respectively. We use the notation $u^\pm(x)$ for the left and right approximate limits of $u$ at $x\in \Omega$ (we use only this convention for the case of {$\Omega=(a,b)\subset \R$, $a\leq b\in\R$}),  $J_u$ for its jump set and $\nu^u$ will denote the Radon-Nikodym derivative of $Du$ with respect to $|Du|$. Given a set $E\subseteq \Omega$ we say that it is a set of finite perimeter in $\Omega$ if $\chi_E\in BV(\Omega)$, where $\chi_E$ denotes the characteristic function of the set $E$. In this case, its perimeter is defined as $Per(E;\Omega):=|D\chi_E|$.  Finally, unless otherwise specified, we always identify a function (in {$H^1_0(0,1)$} or in $BV(\Omega)$) by its precise representative.
%
%
%
%



%
%
\section{System of Euler--Lagrange equations}\label{sec:system}

In this Section, we derive the system of equations that minimizers of $F_\varepsilon$ must satisfy. Although $F_\varepsilon$ is not a convex functional in $(L^2(\Omega))^3$, it is a convex functional in each of their coordinates when one fixes the other two ones. Therefore, by standard results in convex analysis, we obtain that the Euler--Lagrange system of PDE's is the following one:

\begin{equation}\label{descsysbefore}
\begin{cases}
\begin{split}
\quad \lambda \left( (v_1 - f)^2 - (v_2 -f)^2 \right) + \partial (\Phi+\Psi)(u)&\ni 0 \\
\quad 2\varepsilon\lambda u(v_1 -f) +\partial \Phi(v_1) &\ni 0 \\
\quad 2\varepsilon\lambda (1-u)(v_2 -f) +\partial\Phi(v_2) &\ni 0,
\end{split}
\end{cases}
\end{equation}

{\noindent where} the symbol $\partial$ denotes the subdifferential (in $L^2(\Omega)$) of the following two extended real valued convex functions: $$\Phi(g) :=\left\{\begin{array}
  {cc} |Dg|(\Omega) & {\rm if \ } g\in L^2(\Omega)\cap BV(\Omega) \\ +\infty & {\rm if \ } g\in L^2(\Omega)\setminus BV(\Omega)
\end{array}\right.,$$and
$$\Psi(g) = \int_\Omega \mathbb{I}_{[0,1]}(g)\,dx.$$

We easily note that a.e. $\partial \Psi(g) = \partial \mathbb{I}_{[0,1]}(g)$, for any $g\in L^2(\Omega)$. Next result is crucial and permits to reformulate $\partial (\Phi + \Psi)$.
\begin{thm}\label{thm:sumsubdif}
	Let $g\in L^2(\Omega)$. Then,
	$$\partial \left(\Phi + \Psi\right)(g) = \partial \Phi(g) + \partial \Psi(g)$$
	\end{thm}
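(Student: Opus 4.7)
The inclusion $\partial\Phi(g)+\partial\Psi(g)\subseteq \partial(\Phi+\Psi)(g)$ is immediate from the definition of the subdifferential, so the whole task is the reverse inclusion. The natural sufficient conditions for a sum rule (continuity of one summand at a point of the domain of the other, or a Slater–type qualification on $\mathrm{dom}(\Phi)-\mathrm{dom}(\Psi)$) are not available here: neither $\Phi$ nor $\Psi$ has a point of continuity in $L^2(\Omega)$, and $\{0\le h\le 1\}$ has empty $L^2$-interior. Hence I would not try to invoke a black-box theorem directly and instead regularise $\Psi$.

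First I introduce the Moreau–Yosida envelope
$$
\Psi_\alpha(h):=\frac{1}{2\alpha}\int_\Omega\bigl[(h)_-^2+(h-1)_+^2\bigr]\,dx,\qquad \alpha>0,
$$
which is convex, finite and Fréchet differentiable on $L^2(\Omega)$, with Lipschitz gradient
$\nabla\Psi_\alpha(h)=\tfrac{1}{\alpha}\bigl[(h-1)_+-(h)_-\bigr]$.
Because $\Psi_\alpha$ is finite and continuous everywhere, the classical Moreau–Rockafellar sum rule applies and gives $\partial(\Phi+\Psi_\alpha)(h)=\partial\Phi(h)+\{\nabla\Psi_\alpha(h)\}$ for every $h$.

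Given $w\in\partial(\Phi+\Psi)(g)$, I would then regularise the variational inclusion. Consider the strictly convex, coercive, lsc functionals
$$
J_\alpha(h):=\Phi(h)+\Psi_\alpha(h)-\langle w,h\rangle+\tfrac{1}{2}\|h-g\|_{L^2}^{2},
$$
each admitting a unique minimizer $g_\alpha\in BV(\Omega)$. The Euler–Lagrange inclusion for $g_\alpha$ reads
$$
w-\nabla\Psi_\alpha(g_\alpha)-(g_\alpha-g)\in\partial\Phi(g_\alpha),
$$
which already provides the candidate splitting $w=w_\alpha^{(1)}+w_\alpha^{(2)}$ with $w_\alpha^{(1)}\in\partial\Phi(g_\alpha)$ and $w_\alpha^{(2)}:=\nabla\Psi_\alpha(g_\alpha)+(g_\alpha-g)$. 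The remaining work is to send $\alpha\to 0$. Comparing $J_\alpha(g_\alpha)\le J_\alpha(g)=\Phi(g)-\langle w,g\rangle$ with the subdifferential inequality $\Phi(g_\alpha)+\Psi(g_\alpha)\ge \Phi(g)+\langle w,g_\alpha-g\rangle$ (which is trivial when $g_\alpha\notin[0,1]$ and useful when it is), one obtains uniform bounds for $\|g_\alpha-g\|_{L^2}$ and $\Psi_\alpha(g_\alpha)$, and standard $\Gamma$-convergence arguments yield $g_\alpha\to g$ strongly in $L^2$ and $\Psi_\alpha(g_\alpha)\to 0$.

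The final step is to extract a weak limit from $w_\alpha^{(2)}=\nabla\Psi_\alpha(g_\alpha)+(g_\alpha-g)$. Testing the Euler–Lagrange identity against $g_\alpha-g$ and using the monotonicity of $\partial\Phi$ yields a uniform $L^2$-bound on $\nabla\Psi_\alpha(g_\alpha)$, so up to a subsequence $w_\alpha^{(2)}\rightharpoonup w_2$ in $L^2$. Then $w_\alpha^{(1)}=w-w_\alpha^{(2)}\rightharpoonup w-w_2=:w_1$, and the strong–weak closedness of the graphs of the maximal monotone operators $\partial\Phi$ and $\partial\Psi$ (together with $g_\alpha\to g$ strongly) gives $w_1\in\partial\Phi(g)$ and $w_2\in\partial\Psi(g)$, proving $w\in\partial\Phi(g)+\partial\Psi(g)$.

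The main obstacle will be the identification $w_2\in\partial\Psi(g)$: one must pass from the pointwise information $\nabla\Psi_\alpha(g_\alpha)(x)=\tfrac{1}{\alpha}[(g_\alpha(x)-1)_+-(g_\alpha(x))_-]$ to the sign conditions that characterise $\partial\Psi(g)$ on $\{g=0\}$, $\{g=1\}$ and $\{0<g<1\}$. This is precisely where the strong $L^2$-convergence $g_\alpha\to g$ (plus, if needed, a.e.\ convergence of a subsequence) is essential, allowing one to invoke the demi-closedness of the maximal monotone operator $\partial\Psi$ to conclude.
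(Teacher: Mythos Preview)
Your overall strategy---Yosida regularisation of $\Psi$, an auxiliary strictly convex minimisation problem, splitting via its Euler--Lagrange inclusion, and passage to the limit using demi--closedness of maximal monotone graphs---is exactly the route the paper takes (following \cite{Shirakawa}). The strong convergence $g_\alpha\to g$ in $L^2$ that you sketch via the comparison $J_\alpha(g_\alpha)\le J_\alpha(g)$ together with Mosco convergence of $\Psi_\alpha$ to $\Psi$ is fine.

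There is, however, a genuine gap at the crucial step: your claimed uniform $L^2$--bound on $\nabla\Psi_\alpha(g_\alpha)$ does not follow from ``testing the Euler--Lagrange identity against $g_\alpha-g$ and using the monotonicity of $\partial\Phi$''. Testing against $g_\alpha-g$ only yields a bound on $\langle\nabla\Psi_\alpha(g_\alpha),g_\alpha-g\rangle$, and since $0\le g\le 1$ one checks pointwise that this pairing dominates only $\alpha\|\nabla\Psi_\alpha(g_\alpha)\|_{L^2}^2$, giving $\|\nabla\Psi_\alpha(g_\alpha)\|_{L^2}\le C\alpha^{-1/2}$, which blows up. Plain monotonicity of $\partial\Phi$ contributes nothing better here. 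Without a uniform bound you cannot extract a weak limit $w_2$ and the whole splitting collapses.

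The paper closes this gap by testing against $\nabla\Psi_\alpha(g_\alpha)$ itself (their $\beta_\eta(u_\eta)$), so that $\|\nabla\Psi_\alpha(g_\alpha)\|_{L^2}^2$ appears directly. The obstacle is then the cross term $\langle w_\alpha^{(1)},\nabla\Psi_\alpha(g_\alpha)\rangle$, and it is precisely here that a \emph{structural} property of the total variation is needed, not mere monotonicity: for any $v\in\partial\Phi(u)$ and any Lipschitz nondecreasing $h$, one has $\langle v,h(u)\rangle=|Dh(u)|(\Omega)\ge 0$ (this is the paper's Claim, proved via the Anzellotti pairing). Since $t\mapsto\tfrac{1}{\alpha}[(t-1)_+-t_-]$ is Lipschitz nondecreasing, this gives $\langle w_\alpha^{(1)},\nabla\Psi_\alpha(g_\alpha)\rangle\ge 0$ and hence the uniform estimate $\|\nabla\Psi_\alpha(g_\alpha)\|_{L^2}\le C$. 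Your outline is otherwise on the right track, but you must replace the test function and invoke this TV--specific ``angle condition''; generic maximal--monotone arguments are not enough, which is exactly why the paper singles the Claim out.
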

\begin{proof}
  We point out that standard results to decompose the subdifferential of the sum, such as those in \cite{Brezis}, cannot be applied since {the interior of the domains of both functionals are empty.} We follow the strategy of \cite[Thm. 3.1]{Shirakawa}, consisting in an ad-hoc proof by approximating the subdifferential of the indicator function by its Yoshida's regularization. First of all, we state the following claim, whose proof we postpone to the Appendix.\\

 {\noindent\textit{{\bf Claim:} Let $h:\R\to \R$ be a Lipschitz non decreasing function. Then, {it} holds:}
  \begin{equation*}
    \langle v, h(u)\rangle = |Dh(u)|(\Omega),\quad
  \forall v\in \partial\Phi(u).\end{equation*}}

\noindent Since we know that the inclusion
\begin{equation*}\label{inclusionsubdif}
\partial\left(\Phi + \Psi\right)(u) \supseteq \partial\Phi(u) + \partial\Psi(u)\,,  \qquad \forall u\in L^2(\Omega)
\end{equation*}
is satisfied, it is sufficient to show the converse inclusion. Let $u\in L^2(\Omega)$ be such that $\partial (\Phi +\Psi)(u)\neq \emptyset$,  and let $v\in\partial (\Phi +\Psi)(u)$. We define $\varphi_u:L^2(\Omega)\rightarrow\mathbb{R}\cup\{+\infty\}$ by
$$
\varphi_u(w) = (\Phi + \Psi)(w) + \frac{1}{2}\|{w}\|_2^2 -\langle v + u, w\rangle\,.
$$
We note that $\varphi_u$ is coercive, strictly convex and lower semi-continuous. Then, it is easy to see that $u$ is the unique minimizer of $\varphi_u$.\\\\
Now, for any $0 < \eta<1$, let $\beta_ \eta$ be the Yoshida regularization of $\partial \mathbb{I}_{[0,1]}$, {i.e.}
$$
\beta_\eta(t) = \frac{(t-1)_+ - t_-}{\eta}\,, \qquad \forall t\in\mathbb{R},
$$
{here subindexes $\pm$ represent respectively, the positive and the negative part of the function.}
We next consider $\gamma: L^2(\Omega)\rightarrow \mathbb{R}\cup+\infty$ defined by
$$
	\gamma(w) = { \Phi(w) +\frac{1}{2}\|w\|_2^2-\langle v+u,w\rangle} + \int_\Omega \overline{\beta_\eta}(w)\,dx\,,
$$
where $\overline{\beta_\eta}$ is a primitive of $\beta_\eta$. We note that $\gamma$ is coercive,
strictly convex over $\text{Dom}(\Phi + \Psi)$ and lower semi-continuous. Thus, $\gamma$ has a unique minimizer $u_\eta$ which satisfies the corresponding Euler--Lagrange equation:
$$
	-\beta_\eta(u_\eta) - u_\eta + (v + u)\in \partial \Phi(u_\eta)\,.
$$
In consequence, we know that the following equation has a unique solution
$$
v_\eta + \beta_\eta(u_\eta) + u_\eta = v + u\,, \qquad \text{where $v_\eta \in \partial \Phi(u_\eta)$}\,.
$$
{Multiplying by $\beta_\eta(u_\eta)$ both sides of the previous equation}, we have, for any $0<\eta<1$
$$
\langle v_\eta, \beta_\eta(u_\eta) \rangle + \|\beta_\eta(u_\eta)\|_2^2 + \langle u_\eta, \beta_\eta(u_\eta) \rangle = \langle v + u, \beta_\eta(u_\eta)\rangle
$$
Here, since $t\beta_\eta(t)\geq 0$ for any $t\in\mathbb{R}$, we note that $\langle u_\eta, \beta_\eta(u_\eta)\rangle \geq 0$. Moreover, by the previous Claim, we have {that}
$$
\langle v_\eta , \beta_\eta(u_\eta)\rangle \geq 0\,.
$$
Then, we see that $\{\beta_\eta (u_\eta) \}_\eta$ and $\{ v_\eta\}_\eta$ are bounded in $L^2(\Omega)$ and $\{u_\eta \}_\eta$ is bounded in $L^2(\Omega)$ and $BV(\Omega)$. Therefore, there {is} a sequence $\{\eta_n \}_{n}\subset (0,1)$ such that $\eta_n\to 0$ and {there exists a} function $u_0\in L^2(\Omega)$ such that
$$
u_{\eta_n}\stackrel{n\rightarrow \infty}{\longrightarrow} u_0 \quad \text{in $L^1(\Omega)$}\,.
$$
Moreover, we can assume that there {exist} $v_0, \xi \in L^2(\Omega)$ such that $u_{\eta_n}, v_{\eta_n}$ and $\beta_{\eta_n}(u_{\eta_n})$ converge weakly in $L^2(\Omega)$ to $u_0,v_0$ and $\xi$, respectively.\\\\
In addition, we note that
$$
\frac{1}{\eta}(u_\eta -1)_+ \leq |\beta_\eta(u_\eta)|\quad\text{and}\quad \frac{1}{\eta}(u_\eta)_-\leq |\beta_\eta (u_\eta)|\,,
$$
for any $0<\eta <1$ and thus, we have
$$
(u_\eta -1)_+\rightarrow 0 \quad \text{and} \quad (u_\eta)_- \rightarrow 0 \quad \text{in $L^2(\Omega)$ as $\eta\rightarrow 0$}\,.
$$

Hence, we can show that $u_0 (x)\in [0,1]$ a.e. in $\Omega$. We conclude that
\begin{align*}
\begin{cases}
&\xi \in \partial \mathbb{I}_{[0,1]}(u_0)\quad \text{a.e. in $\Omega$,}\\
&v_0 \in \partial \Phi(u_0) \ \ \text{and} \ \ v_0 + \xi + u_0 = v + u \ \text{in $L^2(\Omega)$}\,.
\end{cases}
\end{align*}

This implies that $u_0$ is a minimizer of $\varphi_u$. Then, $u_0 = u$ , and consequently, $v = \xi + v_0$ in $L^2(\Omega)$, which finishes the proof.
\end{proof}

Up to this point, we have worked without imposing any restriction in the dimension of the domain, Now, we introduce the characterization of the subdifferential of the total variation in $L^2(\Omega)$, proposed by Andreu, Ballester, Caselles and Maz\'on in \cite{Mazon} (see also \cite{Mazon_book}), in the specific case of the domain {being} an interval in 1-D, which we take as $(0,1)$ without loose of generality, (see \cite{decico-castra} for a proof).

\begin{thm}[\textbf{TV characterization in 1D}]\label{charactBV}
	Let $u$ be in $BV(0,1)$ such that $\partial \Phi(u) \neq \emptyset$. Then $v\in \partial\Phi(u)$ if and only if there exists $\boldsymbol{z_u}\in H_0^1 (0,1)$ such that a.e. $|\boldsymbol{z_u}|\leq 1$ and
	$$
	 v = -(\boldsymbol{z_u})_x\,, \qquad |Du|=\boldsymbol{z_u}\cdot Du,	
	$$ where the measure $\boldsymbol{z_u}\cdot Du\in\mathcal M(0,1)$ is defined as
\begin{eqnarray*}
(\boldsymbol{z_u}\cdot Du) (U) &:=& \int_U \boldsymbol{z_u}\cdot u_x \,dx+\int_U {\boldsymbol{z_u}}\cdot\nu^u \,d |D^c u| \\
	&+& \sum_{x_\in J_u\cap U}  (u^+(x)-u^-(x)) {\boldsymbol{z_u}}(x)\cdot \nu^u(x),
	\end{eqnarray*} for any Borel set $U\subset (0,1)$.
\end{thm}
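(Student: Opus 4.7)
The plan is to prove the two implications separately using the Fenchel--Young characterization of the subdifferential combined with an integration--by--parts (Green--type) formula for BV functions against Sobolev vector fields vanishing on the boundary.

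For the easy implication, I would take any competitor $w\in L^2(0,1)\cap BV(0,1)$ and compute the pairing $\langle v,w-u\rangle$. Using $v=-(\boldsymbol{z_u})_x$, the standard BV integration--by--parts formula, and the fact that $\boldsymbol{z_u}\in H_0^1(0,1)$ kills the boundary contribution, one gets
\[
\langle v,w-u\rangle \;=\; \int_0^1 \boldsymbol{z_u}\,d(Dw-Du) \;=\; \int_0^1\boldsymbol{z_u}\,dDw \;-\;|Du|(0,1),
\]
where the last equality uses the assumed identity $|Du|=\boldsymbol{z_u}\cdot Du$. Since $|\boldsymbol{z_u}|\le 1$ a.e., the pairing $\int \boldsymbol{z_u}\,dDw$ is bounded above by $|Dw|(0,1)=\Phi(w)$, so $\langle v,w-u\rangle\le\Phi(w)-\Phi(u)$, which is exactly $v\in\partial\Phi(u)$.

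For the converse, I would rely on the dual representation of $\Phi$ in one dimension: by standard convex duality (density of smooth functions in BV in the strict topology) one has, for every $u\in BV(0,1)$,
\[
\Phi(u)\;=\;\sup\bigl\{\,\langle u,-(z)_x\rangle\,:\, z\in H_0^1(0,1),\ |z|\le 1 \text{ a.e.}\bigr\}.
\]
This identifies $\Phi^{*}$ as the indicator of the closed convex set $K=\{-(z)_x:z\in H_0^1(0,1),\,|z|\le1\}$. Now assume $v\in\partial\Phi(u)$; the Fenchel--Young equality $\Phi(u)+\Phi^{*}(v)=\langle v,u\rangle$ forces $v\in K$, so one can extract a field $\boldsymbol{z_u}\in H_0^1(0,1)$ with $|\boldsymbol{z_u}|\le 1$ a.e.\ and $v=-(\boldsymbol{z_u})_x$, and at the same time $\langle v,u\rangle=|Du|(0,1)$. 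Integrating by parts once more,
\[
|Du|(0,1) \;=\; \langle -(\boldsymbol{z_u})_x,u\rangle \;=\; \int_0^1\boldsymbol{z_u}\,dDu.
\]
Since $|\boldsymbol{z_u}|\le1$ $|Du|$--a.e., the measure $\boldsymbol{z_u}\cdot Du$ is dominated by $|Du|$; equality of total masses therefore upgrades to equality of measures on $(0,1)$, i.e.\ $|Du|=\boldsymbol{z_u}\cdot Du$, which is the second required identity.

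The main obstacle I anticipate is the careful handling of the pairing $\boldsymbol{z_u}\cdot Du$: one must justify its decomposition into absolutely continuous, Cantor and jump parts (the Anzellotti pairing), verify that boundary terms in the integration--by--parts formula genuinely vanish thanks to $\boldsymbol{z_u}\in H_0^1$, and check continuity of the pairing under strict BV approximation in order to pass from smooth $u$ (for which duality is elementary) to the general BV case. The remaining steps are essentially algebraic manipulations once these tools are in place.
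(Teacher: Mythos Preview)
The paper does not prove this theorem: it is stated as a known characterization due to Andreu--Ballester--Caselles--Maz\'on, and the reader is referred to \cite{decico-castra} for a proof. There is therefore no in-paper argument to compare against.

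That said, your outline is essentially the standard route and is correct in spirit. Two small points are worth tightening. First, when you assert that $\Phi^{*}$ is the indicator of $K=\{-(z)_x:z\in H_0^1(0,1),\ |z|\le1\}$, you should check that $K$ is closed in $L^2$ and that every $v$ with $\Phi^*(v)<\infty$ actually lies in $K$; in one dimension this is elementary: set $z(x)=-\int_0^x v$, test with constants to get $z(1)=0$, and test with characteristic functions $\chi_{(0,x_0)}$ to get $|z(x_0)|\le1$. Second, the integration by parts $\langle -(\boldsymbol{z_u})_x,u\rangle=\int_0^1 \boldsymbol{z_u}\,dDu$ for $u\in BV$ and $\boldsymbol{z_u}\in H_0^1(0,1)$ is immediate here because $H_0^1(0,1)\subset C_0(0,1)$, so the pairing is just the ordinary action of a continuous function against a finite measure; no Anzellotti machinery is really needed in 1D, and the decomposition into absolutely continuous, Cantor and jump parts is simply the evaluation of $\int \boldsymbol{z_u}\,dDu$ against the three mutually singular pieces of $Du$.
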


With this characterization in mind, the system of Euler--Lagrange equations can be rephrased in the following way:
\begin{prop}\label{prop_E-L}
  Let $(u,v_1,v_2)$ be a minimizer of $F_\varepsilon$. Then, there exist $\boldsymbol{z_u}$, $\boldsymbol{z_{v_1}}$,$\boldsymbol{z_{v_2}}\in H^1_0(0,1)$, {corresponding to $\partial \Phi(u)$, $\partial \Phi(v_1)$ and $\partial\Phi(v_2)$, respectively} as given by Theorem \ref{charactBV}{,} and $g\in\partial \mathbb{I}_{[0,1]}(u)$ such that \begin{equation}\label{descsys}
\begin{cases}
\begin{split}
(\boldsymbol{z_u})_x =\lambda \left( (v_1 - f)^2 - (v_2 -f)^2 \right) + g \\
(\boldsymbol{z_{v_1}})_x= 2\varepsilon\lambda u(v_1 -f) \\
(\boldsymbol{z_{v_2}})_x =2\varepsilon\lambda (1-u)(v_2 -f)\,.
\end{split}
\end{cases}
\end{equation}
\end{prop}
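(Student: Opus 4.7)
The starting point is the subdifferential inclusion system \eqref{descsysbefore}, which has already been derived via standard convex analysis from the fact that $F_\varepsilon$ is convex in each coordinate when the other two are frozen (note that the quadratic fidelity terms are differentiable and so merge into the inclusion as linear data). The task then reduces to translating each line into the PDE formulation \eqref{descsys} using the two main tools just developed: the sum rule for $\partial(\Phi+\Psi)$ (Theorem \ref{thm:sumsubdif}) and the 1D characterization of $\partial \Phi$ (Theorem \ref{charactBV}).

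For the first equation, the inclusion in \eqref{descsysbefore} reads $-\lambda\bigl((v_1-f)^2-(v_2-f)^2\bigr)\in\partial(\Phi+\Psi)(u)$. Invoking Theorem \ref{thm:sumsubdif} I can write this element as a sum $w+g$ with $w\in\partial\Phi(u)$ and $g\in\partial\Psi(u)$. Because $\Psi$ is the integral of the pointwise indicator $\mathbb{I}_{[0,1]}$, one has $\partial\Psi(u)=\partial\mathbb{I}_{[0,1]}(u)$ a.e., so $g$ is the selection required by the statement. Next, Theorem \ref{charactBV} applied to $w$ supplies a field $\boldsymbol{z_u}\in H^1_0(0,1)$ with $|\boldsymbol{z_u}|\le 1$ and $|Du|=\boldsymbol{z_u}\cdot Du$ such that $w=-(\boldsymbol{z_u})_x$. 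Substituting yields $-(\boldsymbol{z_u})_x+g=-\lambda\bigl((v_1-f)^2-(v_2-f)^2\bigr)$, which is exactly the first line of \eqref{descsys} after rearrangement.

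The second and third equations are simpler since no indicator function is present: the existence result quoted in the Appendix guarantees that a minimizer $(u,v_1,v_2)$ has $v_1,v_2\in BV(0,1)$, so Theorem \ref{charactBV} applies directly to $\partial\Phi(v_i)$. From $-2\varepsilon\lambda u(v_1-f)\in\partial\Phi(v_1)$ and $-2\varepsilon\lambda(1-u)(v_2-f)\in\partial\Phi(v_2)$ I obtain fields $\boldsymbol{z_{v_1}},\boldsymbol{z_{v_2}}\in H^1_0(0,1)$, each bounded in modulus by $1$ and satisfying the corresponding duality $|Dv_i|=\boldsymbol{z_{v_i}}\cdot Dv_i$, with $-(\boldsymbol{z_{v_i}})_x$ equal to the respective linear datum. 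Flipping the sign gives the last two lines of \eqref{descsys}.

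There is essentially no real obstacle here; the work is entirely bookkeeping, and the only point worth flagging is the sign convention carried through from \eqref{descsysbefore} to \eqref{descsys} (the subdifferential element appears as $-(\boldsymbol{z})_x$, so the right-hand side of each equation picks up the opposite sign of the corresponding linear term). The genuine content of the proposition lives upstream in Theorems \ref{thm:sumsubdif} and \ref{charactBV}; the proof of Proposition \ref{prop_E-L} itself is just their combined specialisation to the minimizer.
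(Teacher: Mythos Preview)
Your proof is correct and follows exactly the route the paper intends: the paper itself does not spell out a proof of Proposition~\ref{prop_E-L}, presenting it instead as the immediate rephrasing of \eqref{descsysbefore} once Theorem~\ref{thm:sumsubdif} and Theorem~\ref{charactBV} are in hand. Your bookkeeping with the signs and the splitting $\partial(\Phi+\Psi)(u)=\partial\Phi(u)+\partial\Psi(u)$ is accurate, so there is nothing to add.
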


\section{Proofs of the main results}\label{sec:thm}

\subsection{Proof of Theorem \ref{th:constant}}
We need to show first the following auxiliary result.
\begin{lem}[\textbf{Behaviour of $\boldsymbol{z_u}$}]\label{z1}
	Let $u\in BV(0,1)$ and $\boldsymbol{z_u}\in H^1_0(0,1)$, {corresponding to $\partial\Phi(u)$} as provided by Theorem \ref{charactBV}. Then,
	$$
	|\boldsymbol{z_u}| = 1 \,,\quad \text{$|Du|$-a.e}\,.
	$$
\end{lem}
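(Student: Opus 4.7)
The plan is to exploit the two pieces of information provided by Theorem \ref{charactBV}: the pointwise bound $|\boldsymbol{z_u}|\le 1$ a.e., and the measure identity $|Du|=\boldsymbol{z_u}\cdot Du$. Since $|Du|$ decomposes canonically as $|Du|=|u_x|\mathcal{L}^1 + |D^c u|+|D^j u|$, it suffices to prove $|\boldsymbol{z_u}|=1$ separately on each of these three mutually singular parts, and then combine.

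First I would restrict the identity $|Du|=\boldsymbol{z_u}\cdot Du$ (read as an equality of Radon measures on $(0,1)$) to an arbitrary Borel set and localize, using the definition of $\boldsymbol{z_u}\cdot Du$ stated in Theorem \ref{charactBV}. This yields three pointwise identities, one for each part of the Lebesgue decomposition of $Du$. For the absolutely continuous part, $\boldsymbol{z_u}(x)\,u_x(x)=|u_x(x)|$ holds for $\mathcal{L}^1$-a.e.\ $x$; combined with $|\boldsymbol{z_u}|\leq 1$, this forces $\boldsymbol{z_u}(x)=\operatorname{sign}(u_x(x))$ wherever $u_x\ne 0$, and in particular $|\boldsymbol{z_u}|=1$ on the support of $u_x\,\mathcal{L}^1$. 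For the Cantor part, the identity reads $\boldsymbol{z_u}(x)\,\nu^u(x)=1$ for $|D^c u|$-a.e.\ $x$, and since $|\nu^u|=1$ this gives $|\boldsymbol{z_u}|=1$ on a set of full $|D^c u|$-measure.

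The jump part requires a small extra word. At every $x\in J_u$ the defining formula gives
\begin{equation*}
(u^+(x)-u^-(x))\,\boldsymbol{z_u}(x)\,\nu^u(x)=|u^+(x)-u^-(x)|,
\end{equation*}
and because $\nu^u(x)=\operatorname{sign}(u^+(x)-u^-(x))$ on $J_u$, we obtain $\boldsymbol{z_u}(x)=1$ on $J_u$; here we use the fact that $\boldsymbol{z_u}\in H^1_0(0,1)\subset C([0,1])$, so the pointwise value at each jump point is unambiguously defined. Putting the three parts together and recalling that $|Du|$ concentrates on the union of these three mutually singular sets, we conclude $|\boldsymbol{z_u}|=1$ $|Du|$-a.e.

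I do not foresee a genuine obstacle: the argument is essentially the saturation-of-the-dual-bound trick (the pairing $\boldsymbol{z_u}\cdot\nu^u$ can only equal $1$ in the identity $|Du|=\boldsymbol{z_u}\cdot Du$ if $|\boldsymbol{z_u}|$ saturates the bound). The only point of mild care is to check each stratum of the decomposition separately, since $|\boldsymbol{z_u}|$ being an $L^\infty$ equivalence class must still be well-defined on the jump atoms, which is granted by the $H^1_0$-regularity of $\boldsymbol{z_u}$.
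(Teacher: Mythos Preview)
Your proof is correct and follows essentially the same approach as the paper: decompose the identity $|Du|=\boldsymbol{z_u}\cdot Du$ along the three mutually singular parts of $Du$ (absolutely continuous, Cantor, jump) and read off $|\boldsymbol{z_u}|=1$ on each stratum. The only cosmetic difference is that the paper records the resulting value of $\boldsymbol{z_u}$ on each part as a signed ratio (e.g.\ $\boldsymbol{z_u}=(u^+-u^-)/|u^+-u^-|$ on $J_u$), whereas you arrive at $\boldsymbol{z_u}=1$ on $J_u$ by taking the pairing formula in Theorem~\ref{charactBV} literally; either way the conclusion $|\boldsymbol{z_u}|=1$ is the same, and your extra remark that $\boldsymbol{z_u}\in H^1_0\subset C([0,1])$ makes pointwise evaluation at jump atoms legitimate is a welcome clarification.
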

\begin{proof}
	We decompose both measures $|Du|$ and $\boldsymbol{z_u}\cdot Du$ in the following way:
	\begin{align*}
	\qquad \boldsymbol{z_u}\cdot Du &= {(\boldsymbol{z_u}\cdot u_x)} \mathcal L^1 + \boldsymbol{z_u}\cdot D^ju + \boldsymbol{z_u}\cdot D^cu \\
	|Du| &= |u_x|\mathcal L^1 + |{ D^j}u| + |D^cu|
	\end{align*}
	Since both decompositions are mutually singular, we have
	$$
	\boldsymbol{z_u} =
	\begin{cases}
	\quad \displaystyle\frac{u_x}{|u_x|}\,,& \quad \text{$|u_x|\mathcal L^1$-a.e}\\
		\quad \displaystyle\frac{u^+ - u^-}{|u^+ - u^-|}\,,& \quad \text{$|D^j u|$-a.e}\\
		\quad \displaystyle\frac{D^c u}{|D^c u|}\,,& \quad \text{$|D^c u|$-a.e}
	\end{cases}
	$$
	and thus, we have $|\boldsymbol{z_u}| = 1\,,\quad |Du|-a.e$.
\end{proof}

\begin{proof}[Proof of Thm. \ref{th:constant}]
	Firstly, we note that if $(u,v_1,v_2)$ is a minimizer of $F_\varepsilon$, then, it is easy to see that all variables take values in $[0,1]$ a.e in $(0,1)$ as shown in Lemma \ref{lem:truncation} in the Appendix. Suppose that there exists a Borel set $U\subset (0,1)$ such that $Dv_1   (U)\neq 0$. By Lemma \ref{z1}, we know that there is $x_1\in U$ such that $\boldsymbol{z_{v_1   }}(x_1)\in\{-1,+1\}$, {where $\boldsymbol{z_{v_1}}$ is given by Proposition \ref{prop_E-L}.}
	Then, {by $\eqref{descsys}_2$, } we have the next inequality
	\begin{align*}
	\quad 1 = \left| \int_0^{x_1} (\boldsymbol{z_{v_1}})_x   \,dx\right| =& 2\varepsilon\lambda \left| \int_0^{x_1} u(v_1    - f)\,dx  \right| \leq\\
	\leq& 2\varepsilon\lambda \int_0^1 |u||v_1    - f|\,dx \leq 4\varepsilon\lambda
	\end{align*}
	and in consequence, $\varepsilon \geq 1/(4\lambda)$ and thus, a contradiction by hypothesis.

{In order to get that $v_2$ is also constant, we apply the same argument to $\eqref{descsys}_3$}.
\end{proof}

Under the size condition $\varepsilon<\frac{1}{4\lambda}$, which we assume from now on, we integrate the second and third Eqs. in \eqref{descsys} in $(0,1)$ and we obtain
\begin{equation}\label{c1c2}
v_1    = \frac{\displaystyle\int_0^1 u f\,dx }{\displaystyle\int_0^1 u\,dx}\,, \qquad v_2    = \frac{\displaystyle\int_0^1 (1-u) f \,dx}{\displaystyle\int_0^1 (1-u)\,dx},
\end{equation}
in the case that $u$ is not constant with values $0$ or $1$. If $u\equiv 0$ (resp. $u\equiv 1${)} then $v_1$ (resp. $v_2$) can be any constant value in $(0,1)$. We finally note that, being $v_1,v_2$ constants, the energy functional does not depend on $\varepsilon$. {From now on, we will assume the size condition
$\varepsilon<\frac{1}{4\lambda}$.} We rename the constants as $c_1,c_2$ for consistency and {remove} the $\varepsilon$ dependence by letting \begin{align*} F(u,c_1,c_2) &= |Du|((0,1)) + \int_0^1 \mathbb{I}_{[0,1]} (u)\,dx\, +\\&+\lambda \int_0^1 (u(c_1-f)^2+(1-u)(c_2-f)^2)\,dx\,.  \end{align*}

\subsection{Proof of Theorem \ref{th:binary}}

This Section is devoted to prove that the first coordinate of the minimizer is necessarily a binary BV function. { Hereinafter, we assume that the datum $f$ satisfies $(H)$.} The proof will be done in two different steps. First of all, we show that if  $(u,c_1,c_2)$ is a minimizer, then there is a \lq\lq quasi--piecewise constant\rq\rq competitor $\overline u$ with lower energy than (or equal to) the corresponding to $u$. Then, we prove that the competitor cannot be a minimizer in case it is not binary by using the PDE system \eqref{descsys}.
	
\noindent We start by defining our concept of quasi--piecewise constant function.
\begin{defn}\label{uinterm}
	We say that $u\in BV(0,1)$ is a  {quasi--piecewise constant} if there exists a piecewise constant function $u_s$ and an a.e binary function $u_b$ which fulfill that they are not non-0 simultaneously and
	$$
	u(x) = u_s(x) + u_b(x)\,, \quad \text{a.e. $x \in (0,1)$}\,.
	$$
\end{defn}

\vspace{0.1cm}
\begin{thm}\label{existuint}{Let} $c_1,c_2\in [0,1]$.
	Given $u$ a minimizer of $F(\cdot,c_1,c_2)$, there exists a quasi--piecewise constant $\overline u\in BV(0,1)$ satisfying \begin{equation}\label{tostep}
F(\overline u,c_1, c_2) \leq F (u, c_1, c_2)\,.
\end{equation}
\end{thm}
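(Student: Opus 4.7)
The plan is to transform the variational problem via coarea into a set-valued optimization, and produce a (binary, hence quasi-piecewise constant) competitor as the characteristic function of an optimal set. Since any minimizer $u$ of $F(\cdot,c_1,c_2)$ satisfies $0\le u\le 1$ a.e.\ (Lemma A.2 in the Appendix), the indicator term vanishes, and using the identity $u(c_1-f)^2+(1-u)(c_2-f)^2=u\,h+(c_2-f)^2$ with $h:=(c_1-f)^2-(c_2-f)^2$ we may write
$$F(u,c_1,c_2)=|Du|((0,1))+\lambda\int_0^1 u\,h\,dx+K,\qquad K:=\lambda\int_0^1(c_2-f)^2\,dx.$$

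The key step is to apply the $1$D coarea formula to $|Du|$ and the layer-cake decomposition to $\int_0^1 u\,h\,dx$ (legitimate because $u\ge 0$ and $h\in L^1\cap L^\infty$ by $f\in BV(0,1)\subset L^\infty$). With $E_t:=\{u>t\}$ and the geometric energy
$$G(E):=\mathrm{Per}(E;(0,1))+\lambda\int_E h\,dx,$$
this produces the identity $F(u,c_1,c_2)=\int_0^1 G(E_t)\,dt+K$. Next, I would establish by the direct method that $G$ attains its infimum on sets of finite perimeter in $(0,1)$: $G$ is bounded below by $-\lambda\|h\|_{L^1}$, $G(\emptyset)=0$, minimizing sequences have uniformly bounded perimeter (so are $L^1$-compact), the perimeter is lower semicontinuous under $L^1$-convergence of characteristic functions, and the linear term passes to the limit by dominated convergence. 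Let $E^*$ denote such a minimizer.

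Finally, I would take $\overline u:=\chi_{E^*}$. Since $E^*$ has finite perimeter, $\overline u\in BV(0,1)$ and is binary, so it fits Definition \ref{uinterm} with $u_s\equiv 0$ and $u_b=\overline u$; in particular $\overline u$ is quasi-piecewise constant. As all nontrivial level sets of $\overline u$ coincide with $E^*$, the key identity applied to $\overline u$ gives $F(\overline u,c_1,c_2)=G(E^*)+K$, and hence
$$F(\overline u,c_1,c_2)=G(E^*)+K\;\le\;\int_0^1G(E_t)\,dt+K=F(u,c_1,c_2),$$
using $G(E^*)\le G(E_t)$ for every $t\in(0,1)$, which is exactly \eqref{tostep}. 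The only step that requires careful verification is the coarea/layer-cake identity linking $F$ and $G$; everything else is routine. Notice that this argument uses neither the hypothesis $(H)$ nor the Euler--Lagrange system \eqref{descsys}: these become relevant only in the subsequent step, where one uses them to promote the quasi-piecewise-constant competitor to a binary or constant minimizer.
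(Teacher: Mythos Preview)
Your argument is correct and takes a genuinely different route from the paper. The paper works pointwise via the Euler--Lagrange system: it studies the calibration field $\boldsymbol{z_u}$ at points where $u(x)\in(0,1)$, shows (using $(H)$) that such points are confined, up to a null set, to a countable union of open intervals on which $f=(c_1+c_2)/2$, and then replaces $u$ by constants on each such interval to obtain $\overline u$. Your proof bypasses all of this by the coarea/layer--cake identity $F(u,c_1,c_2)=\int_0^1 G(E_t)\,dt+K$ and produces $\overline u$ as the characteristic function of a global minimizer of the set functional $G$.

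Your approach is shorter, needs neither hypothesis $(H)$ nor the PDE characterization of $\partial\Phi$, and in fact delivers a \emph{binary} competitor outright rather than merely a quasi--piecewise constant one. The trade--off is that the paper's construction yields a competitor $\overline u$ that coincides with $u$ outside an explicitly identified ``bad'' region contained in the level set $\{f=(c_1+c_2)/2\}$; this structural information is what the authors reuse in Section~\ref{sec:prop} to locate $J_u$ inside $C_{1,2}$. Your argument discards that information, so the later properties $(a)$ and $(b)$ would have to be recovered by a separate PDE argument --- exactly as you anticipate in your closing remark.
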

\begin{proof}
	Let $x_0$ be in $(0,1)$ such that $u(x_0)\notin \{0,1\}$ and $x_0\notin J_{u}$, and let $\boldsymbol{z_{u}}\in H_0^1(0,1)$ be the {vector field associated to $\partial\Phi(u)$ as given }by Theorem \ref{charactBV}{.}
	We distinguish two cases:
	\begin{enumerate}[(i)]
	\item The case $x_0\in(|\boldsymbol{z_{u}}|)^{-1}(\{1\})$: We assume without loss of generality that $\boldsymbol{z_{u}}(x_0)=1$  (for $\boldsymbol{z_{u}}(x_0)=-1$, the argument is analogous).
	Under this assumption, we know that ${u}$ is continuous at $x_0$ {(remember that we always identify a BV function with its precise representative)} and thus, ${u} \notin \{0,1\}$ {in} a neighbourhood of $x_0$ denoted by $\mathcal{E}_{x_0}$. Hence, using {$\eqref{descsys}_1$}, we know  that
	\begin{equation}\label{thm2:inter1}
	(\boldsymbol{z_{u}})_x = \lambda\left((c_1 - f)^2 - (c_2 - f)^2\right) \qquad \text{in $\hspace{0.1cm}\mathcal{E}_{x_0}$}\,.
	\end{equation}
	Since $f\in BV(0,1)$, we have by the above expression that $(\boldsymbol{z_{u}})_x\in BV(\mathcal{E}_{x_0})$  and thus, its lateral traces $(\boldsymbol{z_{u}})_x^+(x_0)$ and $(\boldsymbol{z_{u}})_x^-(x_0)$ are well defined. In addition, since  $\boldsymbol{z_u}(x_0)=1$ and $\|\boldsymbol{z_u}\|_{{L^\infty(\Omega)}} \leq1$, this implies that
	$$
	\nonumber(\boldsymbol{z_{u}})_x^+ (x_0)\leq 0 \leq (\boldsymbol{z_{u}})_x^- (x_0)
	$$
	and we have, by \eqref{thm2:inter1} that
	$$
	f^-(x_0) \leq \frac{c_1 + c_2}{2} \leq f^+(x_0)\,.
	$$
	Then, 
we note that the set
	\begin{align*}
	&\left\{ x\in(0,1)\backslash(J_u\cup J_f)  : |\boldsymbol{z_{u}}(x)| = 1 \wedge {u}(x)\in (0,1)\hspace{0.1cm} \right\}
	\intertext{is a subset of the following set:}
	&A:=\left\{ x\in (0,1)\backslash  J_f : {u}(x)\in(0,1) \hspace{0.1cm}\wedge\hspace{0.1cm} f(x) = \frac{c_1 + c_2}{2}  \right\}\,.
	\end{align*}
	We note that $A = A^o \cup (\partial A\cap A)$ where $A^o$ and $\partial A$ are the (topological) interior and boundary of $A$, respectively. Particularly, we note that
	$$
	A^o = \bigcup_{k=1} I_k,
	$$
		where $\{I_k = (a_k, b_k)\}_k$ {is a disjoint collection of open intervals}; and $\mathcal{L}^1(\partial A\cap A) = 0$ by assumption on $f$ and the fact that $\partial A\cap A\subset J_f\cup \partial\{f=\frac{c_1+c_2}{2}\}$.

\smallskip

Next, we will modify $u$ in each interval $I_k$ to decrease the energy. Before doing it, we point out that, for $c_1,c_2$ fixed, minimizing $F$ is equivalent to minimize \begin{align*}G(w)&:=|Dw|((0,1)) +\int_0^1 \mathbb I_{[0,1]}(w)\,dx \,  + \\ &+\lambda\int_0^1 w(c_1-c_2)(c_1+c_2-2f)\,dx\ .\end{align*}

{ Since $I_k\subset A$, we observe that  $$G(u\chi_{I_k})=|Du|((a_k,b_k)).$$} Therefore, if we take $u_k:=u(a_k)^+\chi_{I_k}+ u\chi_{(0,1)\setminus I_k}$ {it is easy to show that} $$F(u_k,c_1,c_2)\leq F(u,c_1,c_2).$$



\item If $x_0\notin(|\boldsymbol{z_{u}}|)^{-1}(\{1\})$: Since $\boldsymbol{z_{u}}\in H^1_0(0,1)\subset C(0,1)$, we take the largest interval $I$ {containing} $x_0$ such that $|\boldsymbol{z_{u}}|(I)\subseteq [0,1)$. By Lemma \ref{z1}, $|Du|=0$ in this interval and thus, $u$ is constant in $I$.
	\end{enumerate}

	\vspace{0.1cm}
	\noindent Consequently, we note that $(0,1)\backslash (\partial A\cap A)$ {can} be decomposed as:
	$$
	\underset{\displaystyle B}{\underbrace{\left((0,1)\cap ({u})^{-1}(\{0,1\}) \right)}} \cup \underset{\displaystyle C}{\underbrace{\left((0,1)\cap ({u})^{-1}((0,1)) \right)}}
	$$
	such that $C$ is a subset of
	$$
	\left(\bigcup_{k=1} I_k\right) \cup \left(C\cap (|\boldsymbol{z_{u}}|)^{-1}([0,1)) \right)\,.
	$$
	
	\vspace{0.1cm}
	\noindent Defining
	$$
	\overline u(x):=\begin{cases}
	\quad {u_k(x)} \quad & \text{if $x\in I_k$ for any $k$}\\
	\quad u(x) \quad & \text{if $x\notin I_k$ for all $k$}\,,
	\end{cases}
	$$
	we obtain that the inequality \eqref{tostep} is {clearly satisfied.} Note that $\overline u$ is a piecewise constant function in $C$ because of the reasoning in (i) and (ii) and thus, {we can take $$u_s:= \overline u\mathcal{X}_{C}.$$ Finally, }as $B$ and $C$ are disjoint, we obtain that $\overline u$ is a quasi--piecewise constant function.
\end{proof}

\bigskip
We introduce now two useful remarks:
\begin{rem}\label{useremark1}
	Let $(u,c_1^u,c_2^u)$ be a minimizer of $F$ such that $c_1^u \leq c_2^u$. Defining $w := 1 - u$, it is easy to show that $ c_1^w = c_2^u$, $c_2^w = c_1^u$ and that
	 $F(w,c_1^w, c_2^w) = F(u,c_1^u, c_2^u)$. On account of it, we assume hereinafter that $c_2\leq c_1$.
	Furthermore, if $c_1^u=c_2^u$, we note that
	$$
	F(u, c_1^u,c_2^u) = |Du|((0,1)) + \lambda\int_0^1 (c_2^u - f)^2\,dx
	$$
	and thus, in order to be a minimizer, $u$ is a necessarily a constant function.
\end{rem}
\begin{rem}\label{useremark2}
	Let $(u,c_1,c_2)$ be a minimizer of $F$ and suppose that there exist $a<b\in J_u$ such that $u^+(a)$, $u^-(b)$, $u(x)\notin \{0,1\}$, for any $x\in (a,b)$. {By integrating { $\eqref{descsys}_1$} on} $(a,b)$, we have that
	$$\label{jumpkey}
	\lambda\int_{a}^{b}\left( (c_1 - f)^2 - (c_2 - f)^2\right)\,dx =  \boldsymbol{z_{u}}(b) - \boldsymbol{z_{u}}(a),
	$$
{where $\boldsymbol{z_{u}}$ corresponds to $\partial\Phi(u)$}.
	Note that {the right hand side} term is equal to $2$, $-2$ or $0$ because of {the fact that} $|\boldsymbol{z_{u}}|=1$ in the jump set $J_{u}$ as a consequence of Lemma \ref{z1}; as $\boldsymbol{z_{u}}\in H^1_0(0,1)$, if $x_0\in J_{u}$, then
	\begin{itemize}
		\item $\boldsymbol{z_{u}}(x_0) = -1$ when $u$ jumps toward a lower step in $x_0$.
		\item $\boldsymbol{z_{u}}(x_0) = 1$ when $u$ jumps toward an upper step in $x_0$.
	\end{itemize}
\end{rem}
After these remarks, we prove the following statement:

\begin{prop}\label{uconstandbin}
	Let $(u,c_1,c_2)$ be a minimizer of $F$ such that $u$ is a quasi--piecewise constant function. Then either $u$ is constant or $u$ is an a.e. binary function.
\end{prop}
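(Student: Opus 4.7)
The plan is to argue by contradiction: suppose $(u,c_1,c_2)$ is a minimizer with $u$ quasi--piecewise constant but neither constant nor a.e.\ binary. By Remark~\ref{useremark1} I may assume $c_1>c_2$ (equality forces $u$ constant). Since $u$ is not a.e.\ binary, there exists a maximal interval $(a,b)\subset(0,1)$ on which $u\equiv\alpha$ for some $\alpha\in(0,1)$. I treat the interior case $a,b\in J_u$; the boundary cases $a=0$ or $b=1$ are handled analogously, using $\boldsymbol{z_u}(0)=\boldsymbol{z_u}(1)=0$.

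I introduce the one--parameter family of admissible competitors
\[
u_\tau:=u+(\tau-\alpha)\chi_{(a,b)},\qquad \tau\in[0,1],
\]
and study $\tau\mapsto F(u_\tau,c_1,c_2)$ for fixed $(c_1,c_2)$. Setting $s_a:=\boldsymbol{z_u}(a)$ and $s_b:=\boldsymbol{z_u}(b)$ (which belong to $\{-1,+1\}$ by Lemma~\ref{z1}), on the open neighbourhood $J$ of $\alpha$ on which the signs of the jumps of $u_\tau$ at $a$ and $b$ remain $s_a$ and $s_b$, the total variation contributes a piece which is affine in $\tau$ with slope $s_a-s_b$. On the other hand, on $(a,b)$ the subgradient $g$ in $\eqref{descsys}_1$ vanishes since $u\in(0,1)$ there, so integrating that equation as in Remark~\ref{useremark2} yields
\[
\lambda\int_a^b\bigl[(c_1-f)^2-(c_2-f)^2\bigr]\,dx=s_b-s_a,
\]
which is precisely the fidelity slope. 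The two slopes cancel, so $F(u_\tau,c_1,c_2)$ is \emph{constant} on $J$, and $J\setminus\{\alpha\}\neq\emptyset$ because $\alpha$ lies strictly inside the admissible range.

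The crucial step is to exhibit $\tau^\ast\in J\setminus\{\alpha\}$ for which the optimal constants for $u_{\tau^\ast}$, given by formulas~\eqref{c1c2}, strictly differ from $(c_1,c_2)$. A direct differentiation of the quotients at $\tau=\alpha$ gives
\[
\tilde c_1'(\alpha)=\frac{(b-a)\bigl(\bar f_{(a,b)}-c_1\bigr)}{\int_0^1 u\,dx},\qquad \tilde c_2'(\alpha)=\frac{(b-a)\bigl(c_2-\bar f_{(a,b)}\bigr)}{\int_0^1(1-u)\,dx},
\]
where $\bar f_{(a,b)}$ denotes the mean of $f$ on $(a,b)$. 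Both derivatives can vanish only if $\bar f_{(a,b)}=c_1=c_2$, which contradicts $c_1>c_2$. Hence at least one of them is nonzero, so for $\tau^\ast\in J$ sufficiently close to $\alpha$ and distinct from it, $(\tilde c_1(\tau^\ast),\tilde c_2(\tau^\ast))\neq(c_1,c_2)$.

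Finally, $(c_1,c_2)\mapsto F(u_{\tau^\ast},c_1,c_2)$ is strictly convex, since both $\int u_{\tau^\ast}\,dx$ and $\int(1-u_{\tau^\ast})\,dx$ are positive (because $u_{\tau^\ast}$ is neither $0$ nor $1$ a.e.\ for $\tau^\ast$ near $\alpha$). Its unique minimizer therefore yields
\[
F\bigl(u_{\tau^\ast},\tilde c_1(\tau^\ast),\tilde c_2(\tau^\ast)\bigr)<F(u_{\tau^\ast},c_1,c_2)=F(u,c_1,c_2),
\]
contradicting the global minimality of $(u,c_1,c_2)$. The main obstacle is establishing the \lq\lq tied\rq\rq{} behaviour of $F(u_\tau,c_1,c_2)$ along the family; Remark~\ref{useremark2} is decisive here, as it forces the TV and fidelity slopes to be exactly opposite, so that the energy stays constant along the perturbation and we can then exploit the further freedom of re-optimizing over $(c_1,c_2)$.
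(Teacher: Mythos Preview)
Your argument is correct and follows the same strategy as the paper: perturb the value of $u$ on a non--binary plateau $(a,b)$, show that $F(\cdot,c_1,c_2)$ is unchanged along the perturbation (via Remark~\ref{useremark2}), and then obtain a contradiction from the fact that $(c_1,c_2)$ would have to remain the optimal pair of constants for the perturbed competitor. The only difference is packaging: the paper splits into three cases according to the relative order of $u^-(a),\,\alpha,\,u^+(b)$ and, in each case, equates the two expressions of $c_1$ (and $c_2$) coming from \eqref{c1c2} for $u$ and for $v_\tau$ to force $c_1=c_2$; you treat all sign patterns at once through $s_a,s_b$, and replace the algebraic identity by the equivalent observation that $\tilde c_1'(\alpha),\tilde c_2'(\alpha)$ cannot both vanish unless $c_1=c_2$, then invoke strict convexity in $(c_1,c_2)$.
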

\begin{proof}
	This statement is proved by contradiction. We suppose that $u$ is not {an a.e} binary function and not constant, i.e, $u_s$ has some non-binary step ($u_s$ being the piecewise constant part). Let {$a,b\in J_u$ be such that} $u_s((a,b)) = \{\beta\}\notin\{0,1\}$. In addition, let $\alpha:=u^-(a)$ and  $\gamma:=u^+(b)$.
	{According to the relation $\alpha \neq \beta \neq \gamma$, there are three different cases to study:}
	\begin{enumerate}[(i)]
		\item $\hspace{0.1cm}\alpha < \beta < \gamma$ (or $\gamma < \beta < \alpha$, {resp.}):
		We define
		$$
		v_\tau(x) := \begin{cases}
		u(x) \quad &\text{ if  $x \notin (a,b)$ }\\
		\tau \quad &\text{ if $x\in(a,b)$ }
		\end{cases},
		$$
		where $\tau\in(\alpha,\beta)$ (or $\tau\in (\gamma, \beta)$, {resp.}). In any case, according to Remarks \ref{useremark1} and \ref{useremark2}, we can suppose that $c_1>c_2$ and
		$$
		\lambda\int_a^b \left((c_1 - f)^2 - (c_2 - f)^2\right)\,dx \,{= \lambda(c_1 - c_2) \int_a^b \left(c_1 +c_2 - 2f\right)dx} = 0\,.
		$$
		{ 
			Since $\lambda(c_1-c_2)>0$, we obtain}
		$$
		\int_a^b  f \,dx = \left(\frac{c_1 + c_2}{2}\right)\left(b-a\right).
		$$
		It is clear that $F(u,c_1, c_2) = F({ v_\tau}, c_1 , c_2)$. Then, $(v_\tau, c_1,c_2)$ is a minimizer too. { Consequently, by  \eqref{c1c2}, we have } 
		\begin{align*}
		c_1 = \frac{\displaystyle\int_0^1 uf \,dx\hspace{0.1cm}}{\displaystyle\int_0^1 u\,dx \hspace{0.1cm}} &= \frac{\displaystyle\int_0^1 v_\tau { f} \,dx\hspace{0.1cm}}{\displaystyle\int_0^1 v_\tau\,dx \hspace{0.1cm}}\qquad &\text{i.e.}\\
		\frac{\displaystyle \int_I uf{\,dx \hspace{0.1cm} + \beta ML}}{\displaystyle \int_{I} u{ \,dx \hspace{0.1cm} + \beta L}} &= \frac{\displaystyle \int_I u f {\,dx \hspace{0.1cm} + \tau ML} }{\displaystyle \int_I u {\,dx \hspace{0.1cm} + \tau L}}\,,&
		\end{align*} where we denoted by $I:=[0,a)\cup (b,1]$, $M:=\frac{c_1+c_2}{2}$ and $L:=b-a$. { Then,
		{\begin{align*}\label{eq:contrc1}
			\beta M L \int_I u\,dx  + \tau L \int_I u{ f}\,dx \hspace{0.1cm} &=\tau M L \int_I u\,dx  + \beta L \int_I u{{f}}\,dx\\
			\intertext{Since $\beta\neq \tau$ and $L \neq 0$, we have}
				M  \int_I u\,dx \hspace{0.1cm} &= \int_I u{f}\,dx\,.
			\end{align*}}}
  {This yields
{$$
		M \left( \int_0^1 u \hspace{0.1cm}  - \beta L  \right) = \int_0^1 uf \hspace{0.1cm}  - \beta M L  \quad \text{i.e}
		\ \ M = \frac{\displaystyle\int_0^1 uf\,dx \hspace{0.1cm}}{\displaystyle\int_0^1 u \,dx\hspace{0.1cm} } = c_1\,,
$$}}
		thus leading to $c_1 = c_2$, i.e, to a contradiction {by Remark \ref{useremark1}.}

\smallskip

		\item $\hspace{0.1cm} \beta < \alpha {\leq} \gamma$ (or $\beta < \gamma { \leq} \alpha$, {resp.}):
		Similarly as before, we consider
		$$
		v_\tau(x) := \begin{cases}
		u(x) \quad &\text{ if  $x \notin (a,b)$ }\\
		\tau \quad &\text{ if $x\in(a,b)$ }\,,
		\end{cases}
		$$
		where $\tau = \alpha$ (or $\tau = \gamma$, {resp.}). In any case, according to Remarks \ref{useremark1} and \ref{useremark2}, we can suppose that $c_1>c_2$ and
		$$
		\lambda \int_a^b \left((c_1 - f)^2 - (c_2 - f)^2\right)\,dx = 2.
		$$
		{We observe that}
		$$
		\int_a^b {f}\,dx = ML - A, 
		$$
		where $M$ and $L$ are the constants defined in the previous case and $A:=\frac{1}{\lambda(c_1 - c_2)}$. Then, it is easy to check that $F(u,c_1, c_2) = F(v_\tau,c_1,c_2)$. Again, $(v_\tau, c_1,c_2)$ is a minimizer and thus, it satisfies \eqref{c1c2}. Repeating the reasoning in the previous case (replacing $ML - A$ instead of $ML$), we obtain
		$$
		M - \frac{A}{L}= c_1
		$$
		and if we redo the same computations for $c_2$ we obtain the same equation, thus leading to $c_1 = c_2$, i.e, to a contradiction {as before.}

\smallskip

		\item $\hspace{0.1cm} \alpha { \leq} \gamma < \beta$ (or $ \gamma { \leq} \alpha < \beta$, {resp.}):
		In this case, we define
		$$
		v_\tau(x) := \begin{cases}
		u(x) \quad &\text{ if  $x \notin (a,b)$ }\\
		\tau \quad &\text{ if $x\in(a,b)$ }\,,
		\end{cases}
		$$
		where $\tau = \gamma$ (or $\tau = \alpha$, {resp.}). Once again, repeating the computations in the previous case, we end up with $c_1=c_2$, thus finishing the proof.
	\end{enumerate}
\end{proof}
We finish this Section by pointing out that Theorem \ref{existuint} and Proposition \ref{uconstandbin} already {prove} Theorem \ref{th:binary}.

\section{Properties of minimizers}\label{sec:prop}

\subsection{Proof of Properties $(a)$ and $(b)$}

In this Section, we show that the set of minimizers of $F$ coincides with that of minimizers to Chan-Vese and we prove Properties $(a)$ and $(b)$ of the minimizers.

\begin{rem}
    It is obvious that, given $E\subset \Omega$ of finite perimeter,  \begin{equation*}\begin{split}\label{chan-vese=ours1}F(\chi_E,c_1,c_2)&= {\rm Per}(E;(0,1)){ + }
\lambda \int_E (c_1-f)^2\,dx+ { \lambda}\int_{(0,1)\setminus E}(c_2-f)^2\,dx.\end{split}\end{equation*} Then, we note that \begin{eqnarray}\label{chan-vese=ours}\nonumber \min_{u\in L^2(0,1),c_1,c_2} F(u,c_1,c_2) &\leq& \min_{E,c_1,c_2} {\rm Per} (E;\Omega)+ {\lambda} \int_E (c_1-f)^2\, dx\\ && + { \lambda}\int_{\Omega\setminus E} (c_2-f)^2\,dx.\end{eqnarray} On the other hand, by Theorem \ref{th:binary}, we now that the minimum of $F$ is achieved in a constant or binary BV function $u$, for the first coordinate. Then, $u=\chi_E$ (or $u=k\in [0,1]$)for a set $E\subseteq (0,1)$ of finite perimeter, which proves the reverse inequality in \eqref{chan-vese=ours}. This shows that minimizers of $F$ coincide with Chan-Vese's minimizers.
\end{rem}

We now prove Properties $(a)$ and $(b)$:

\noindent Suppose that $x_0\in J_u$ and that $u^-(x_0)=1$, $u^+(x_0)=0$. Then, as explained in Remark {\ref{useremark2}}, ${\boldsymbol z_u}(x_0)=-1$, with ${\boldsymbol z_u}$ {corresponding to $\partial\Phi(u)$} as given  by Theorem \ref{charactBV}. Moreover, since  $\partial\mathbb I_{[0,1]}(x)=]-\infty,0]\delta_{0}+[
0,+\infty[\delta_{1}$, by $\eqref{descsys}_1$, as in the {Proof of Theorem \ref{existuint}} that
$$
f^+(x_0)\leq \frac{c_1 + c_2}{2}\leq f^-(x_0).
$$
If instead $u^-(x_0)=0$, $u^+(x_0)=1$, one gets
$$
f^+(x_0)\geq \frac{c_1 + c_2}{2} \geq f^-(x_0).
$$
Therefore \begin{equation}\label{jumplevelset} J_u\subseteq C_{1,2}:=\{x\in (0,1) : f(x)\ni\frac{c_1+c_2}{2}\},\end{equation} where the image of a jump point is understood in a multivalued sense (i.e. $f(x)=[\min\{f^-(x),f^+(x)\},\max\{f^-(x),f^+(x)\}]$). Note that this fact almost proves Properties $(a)$ and $(b)$. The only remaining thing to prove is that a jump point cannot exist in the interior of $C_{1,2}$. Suppose, by contradiction that $x_0\in J_u\cap C_{1,2}^o$ and let $a<x_0$ be such that $J_u\cap [a,x_0[=\emptyset$. Without loosing generality, we can suppose that $u=1$ in $[a,x_0[$ and that $u^+(x_0)=0$. Then, considering $v:=u\chi_{(0,1)\setminus [a,x_0[}$, we easily get that $F(v,c_1,c_2)=F(u,c_1,c_2)$ and, therefore, $(v,c_1,c_2)$ is a minimizer too. Then, repeating the reasoning in Proposition \ref{uconstandbin}, in case $(ii)$ we arrive at a contradiction, which finishes the proof of Properties $(a)$ and $(b)$.\\

\begin{rem}
  We note that Properties $(a)$ and $(b)$ of the minimizers are only expected to hold in the one--dimensional case. In fact, an easy counterexample in two dimensions is provided by $f=\chi_E$ with $E=[-{\frac{1}{2},\frac{1}{2}}]^2$ and $\Omega=[-1,1]^2$. In this case $J_f$ is precisely the boundary of the square $[-{ \frac{1}{2},\frac{1}{2}}]^2$ while it can be proved that, for any { $\lambda>\frac{16}{3}$}  Chan-Vese's minimizer cannot be either $u=\chi_\Omega$ or $u=\chi_E$ 
  , thus showing that $(a)$ and $(b)$ do not hold.\\
  
   In fact, Chan-Vese's energy for these two candidates is exactly $\frac{3}{4}\lambda$ {(for $u=\chi_\Omega$, { $c_1=\frac{1}{4}$}) and $4$ (for $u=\chi_E$, { $c_1=1$, $c_2=0$}). Therefore, for { $\lambda>\frac{16}{3}$}, $u=\chi_\Omega$ is not a minimizer. On the other hand, it is easy to modify the corners of the square by reducing the perimeter and not changing to much the {fidelity} term in the energy. We modify $E$, calling the new set $E_\delta$, by removing the corners through the use of $\delta$-radius circumferential arches tangent to every two contiguous sides of $\partial E$. (see Fig. \ref{fig:vdelta})}. 
  Then one can check that $v_\delta:= \chi_{E_\delta}$, { $c_1=1$, $c_2=\frac{(4-\pi)\delta^2}{3+(4-\pi)\delta^2}$}  for $\delta$ satisfying $$\frac{3\delta}{3+{ (4-\pi)}\delta^2}<\frac{2}{\lambda},$$ has strictly less energy. This fact is related to the non-calibrability of the set $E$ with respect to the isotropic norm in the total variation (see \cite{alter-caselles-chambolle}).\\
  
  \begin{figure}[h!]
	\centering
	\includegraphics[width=.8\linewidth]{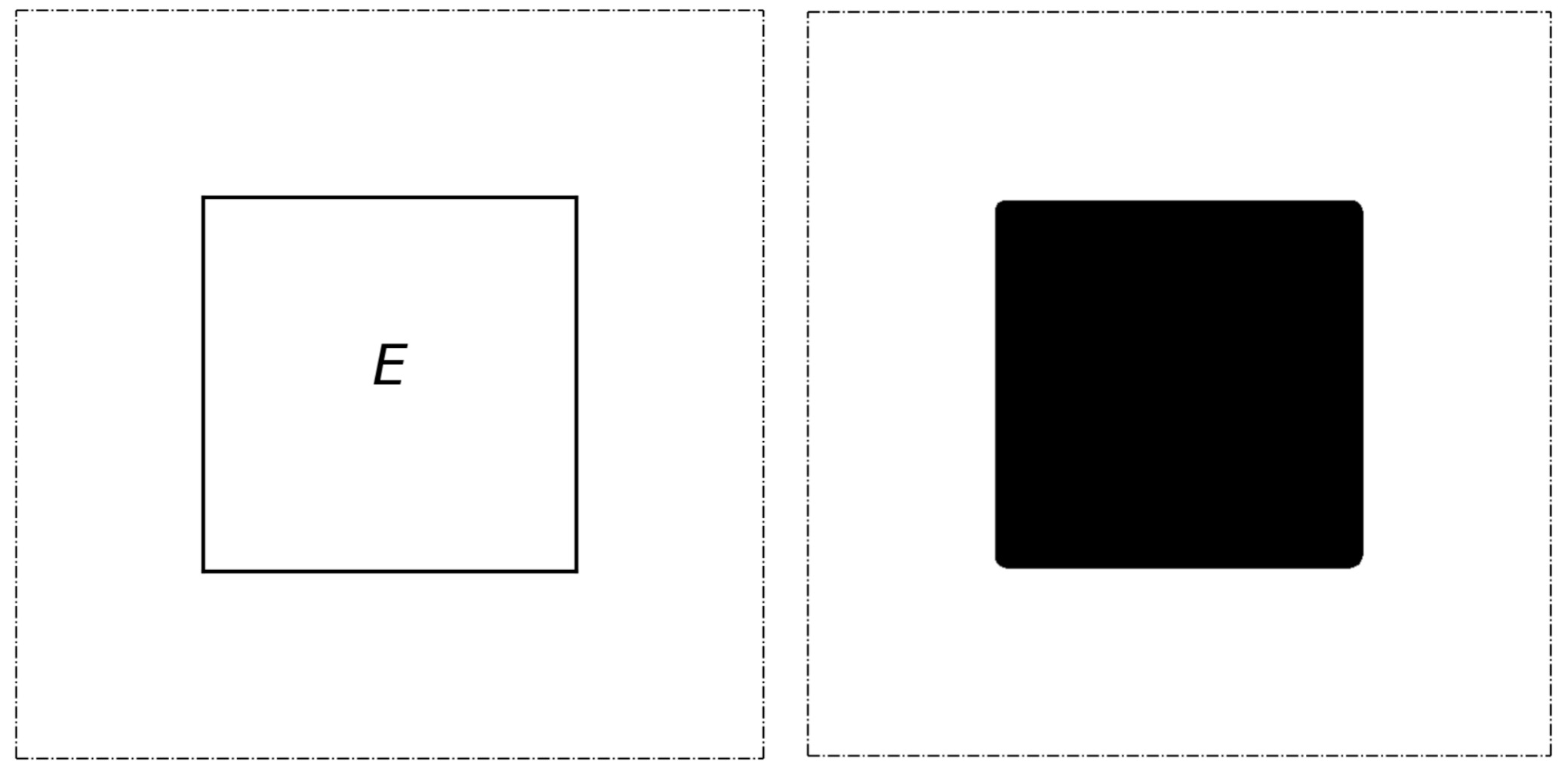}
	\caption{\textit{Left}: $\partial E$ and $\partial \Omega$ represented by solid and dashed lines, respectively. \textit{Right}: Chan-Vese segmentation, where black and white colours represent 1 and 0 numerical values, respectively.}
	\label{fig:propertyfail}
\end{figure}
\begin{figure}[h!]
	\centering
	\includegraphics[width=.4\linewidth]{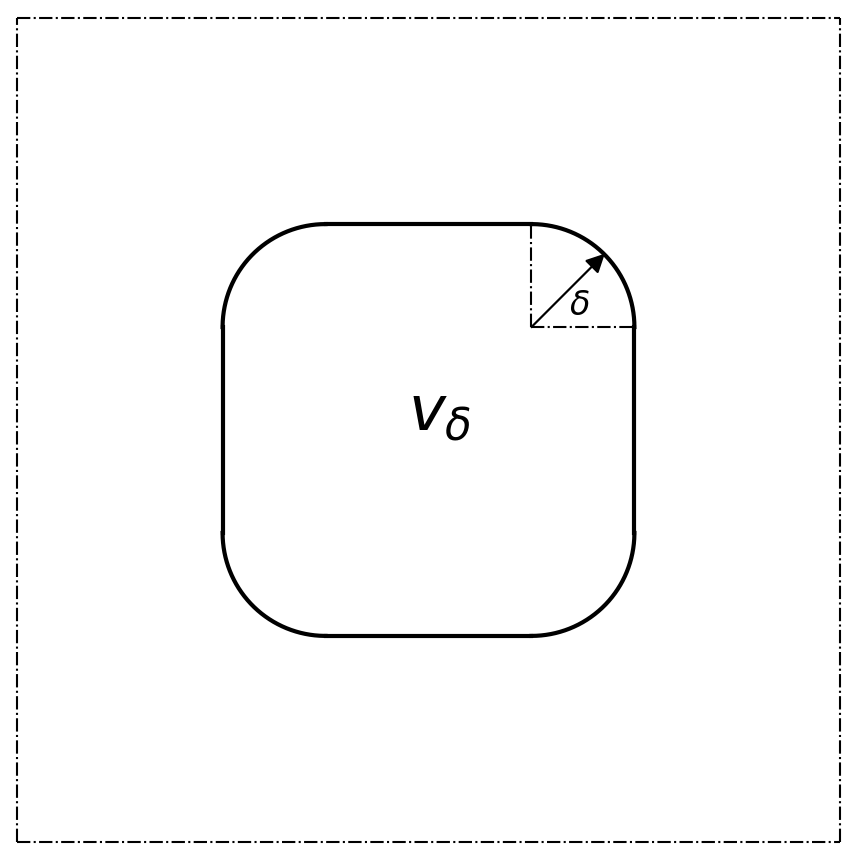}
	\caption{$\partial E_\delta$, i.e, $J_{v_\delta}$.}
	\label{fig:vdelta}
  \end{figure}

  Therefore, if one wants to obtain similar results to Properties $(a)$ and $(b)$ in higher dimensions, the total variation term needs to be changed by an anisotropic version of it as in the case of the anisotropic Rudin--Osher--Fatemi functional, for which stability of piecewise constant functions on rectangles has been recently shown in \cite{lasica-moll-mucha} and \cite{Kirisits-Setterqvist-Scherzer}. We will investigate this issue further in a subsequent paper.

\end{rem}

\subsection{Application of the properties}
\label{sec:algorithm}

In this Section, we  {propose} a trivial way to approach the solution of the 1D Chan-Vese problem 
by using properties $(a)$ and $(b)$ of the minimizer. {{We will also comment on} the pros of this trivial algorithm in front of those based on a Gradient Descent (GD) scheme. {We} remark that those algorithms are applied in an 1-D version of alternating scheme proposed by Chan and Vese in \cite{Chan-Vese}. Hereinafter, we will assume that the boundary of each level of the datum $f$ set has a finite number of points.}

\medskip
{We start with the general case.  In this} one, the idea is:
\begin{enumerate}
	\item[(1)] Take a discretization of the range, {thus} defining the working level sets.
	\item[(2)] In each level set, compute the binary candidate with the least energy {with jumps in the boundary of the level set}.
	\item[(3)] Compare between the solutions and choose one with the {smallest} energy.\\
\end{enumerate}

{Besides}, in the case of $f$ being a step function, we can further simplify the previous idea thanks to the implementation of the inclusion $J_u \subseteq J_f$. This reduction is based on trying all the possible combinations of characteristic solutions whose jump set is a subset of $J_f$.

\begin{figure}[h!]
	\centering
	\includegraphics[width=.5\linewidth]{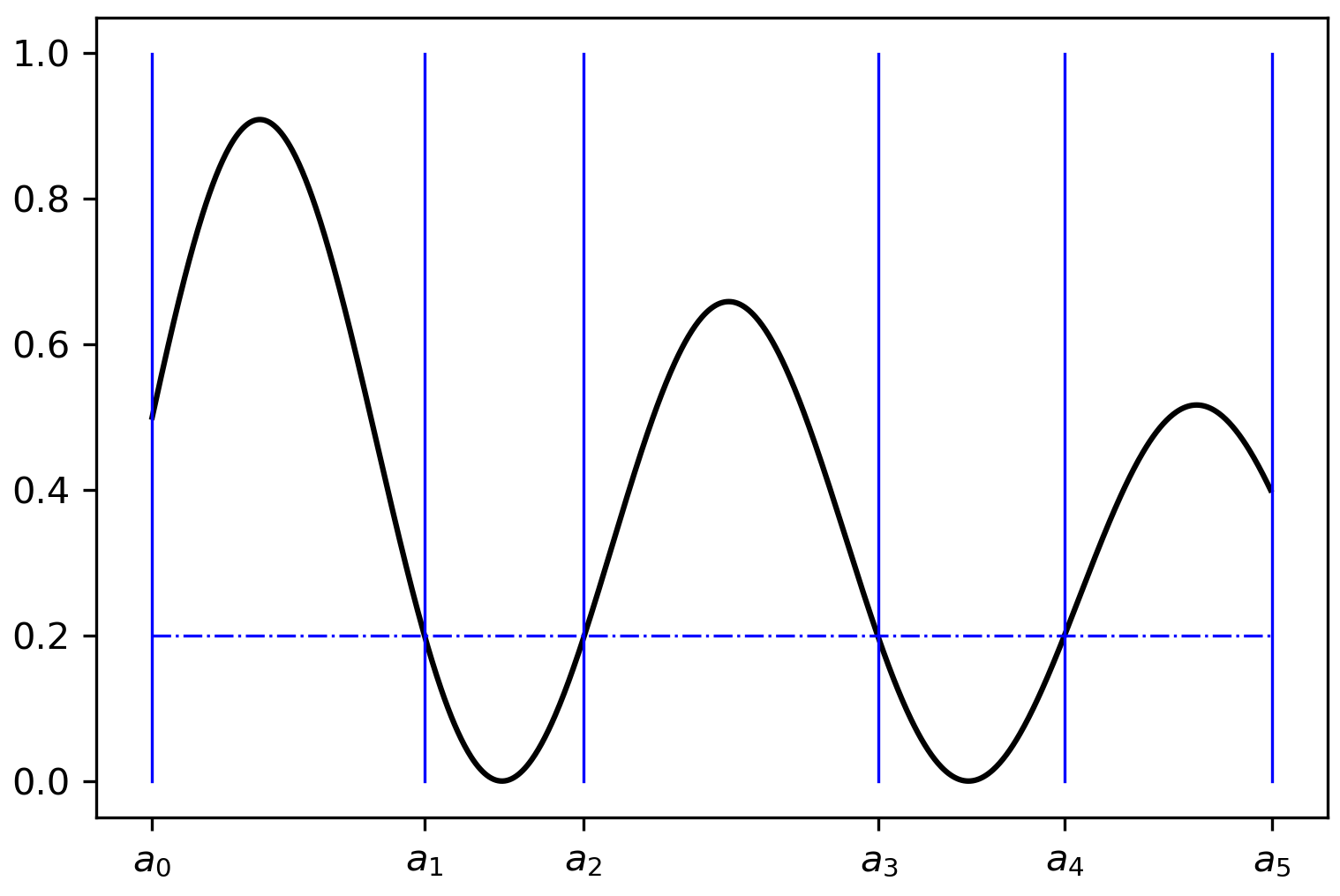}
	\caption{{Given the level set (illustrated by the dashed blue line), the candidates take a constant ($0$ or $1$) value between each couple of vertical lines}.}
	\label{fig:algorithm}
\end{figure}

In Figure \ref{fig:algorithm}, {we sketch how to perform Step 2 in the general algorithm for a fixed level set}.  {Firstly, we obtain all possible jump points for the possible minimizer $u$, ($a_i , i\in\{1,...,m\}$). Then, we know that the candidate to minimizer takes the form} {$$u=\chi_{\cup_{j\in\mathcal{I}}[a_j,a_{j+1}]} \quad \text{where}\quad \mathcal{I}\subseteq\{0, ...,m-1\}$$}

Computing all the {possibilities} we keep the {candidate} with the least energy among them. {We compare its energy with that of the candidate obtained from a previous bigger level set and the procedure is repeated until we reach the lowest level set in the discretization}. To show the suitability of this trivial approach in some situations, we present the following example:

\begin{ex}
	Suppose we segment by the 1-D Chan-Vese model a signal whose shape is similar to the one of the Weierstrass function. This kind of signals has the property of exhibiting abrupt variations of its slope on the whole domain, which causes problems for GD based algorithms. {This intuitive idea can be seen in Figure \ref{fig:weierstrass}, where we {compute an approximation to} the minimizer using two {different} approaches: {in} one {of them we use} the alternating Chan-Vese scheme with a GD-based method (ADAGRAD algorithm, see \cite{adagrad}); {while on the other one we use} the trivial scheme explained above}.

\begin{figure}[h!]
\begin{subfigure}{.49\textwidth}
	\centering
	\includegraphics[width=1\linewidth]{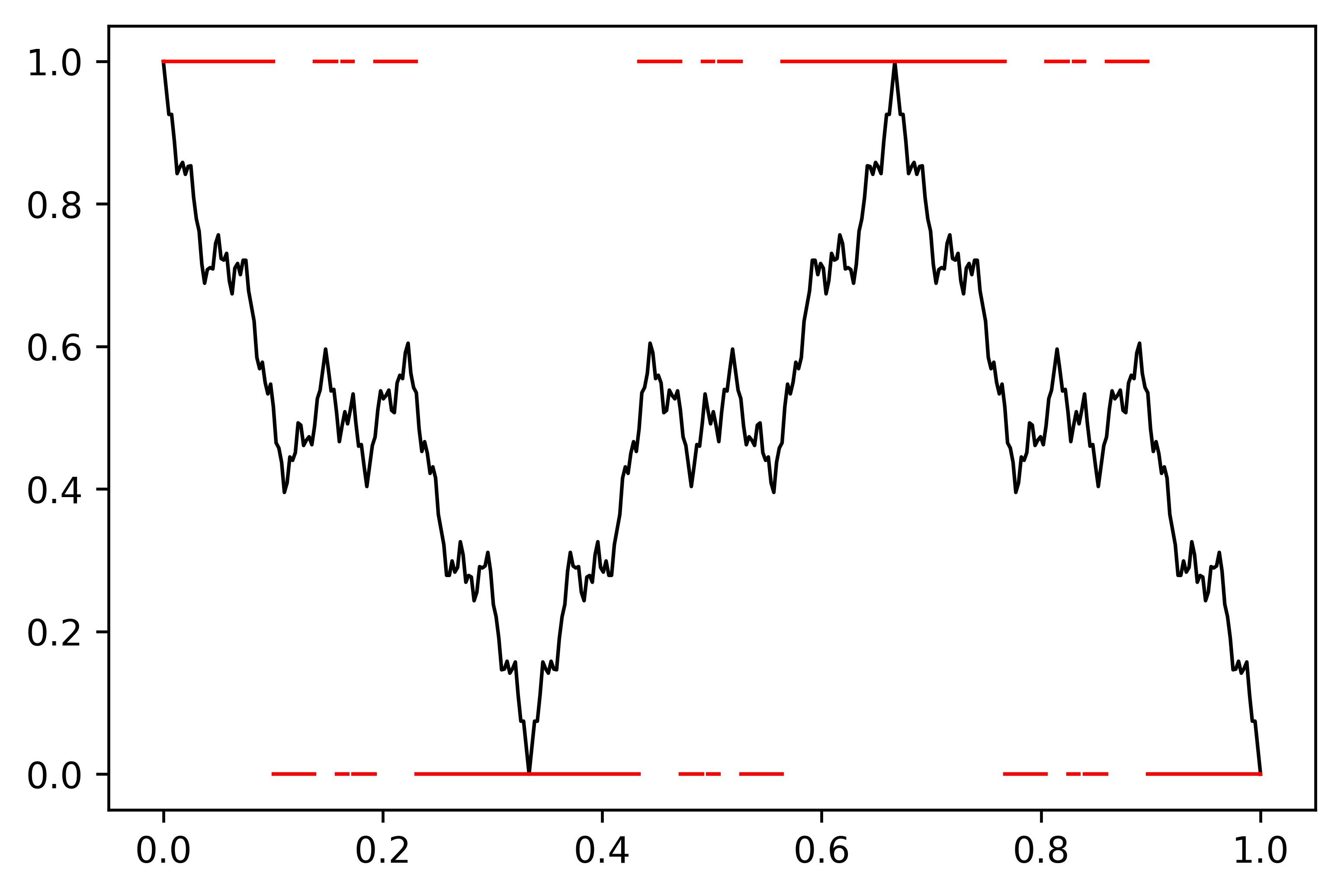}
\end{subfigure}
\begin{subfigure}{.49\textwidth}
	\centering
	\includegraphics[width=1\linewidth]{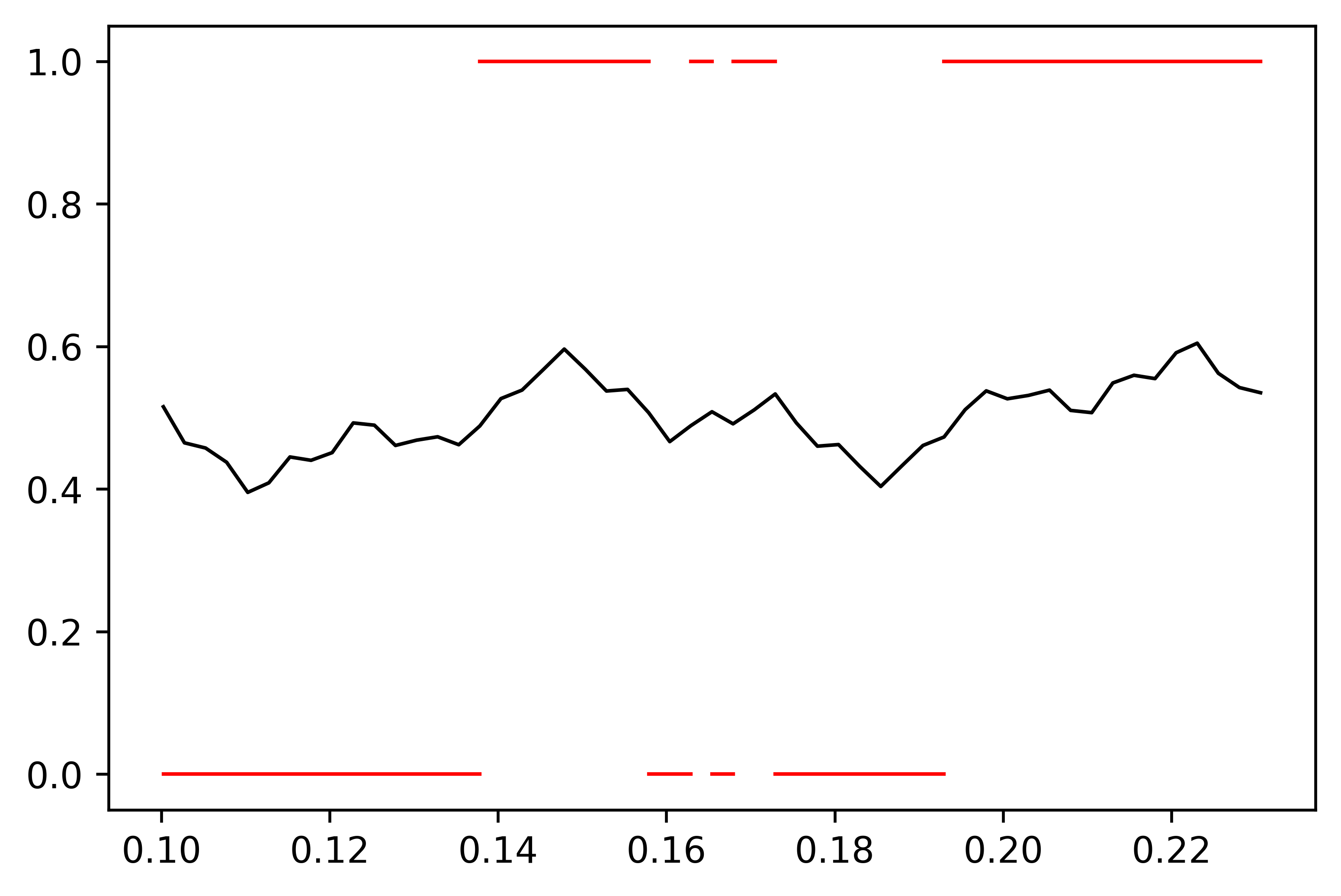}
\end{subfigure}
\begin{subfigure}{.49\textwidth}
	\centering
	\includegraphics[width=1\linewidth]{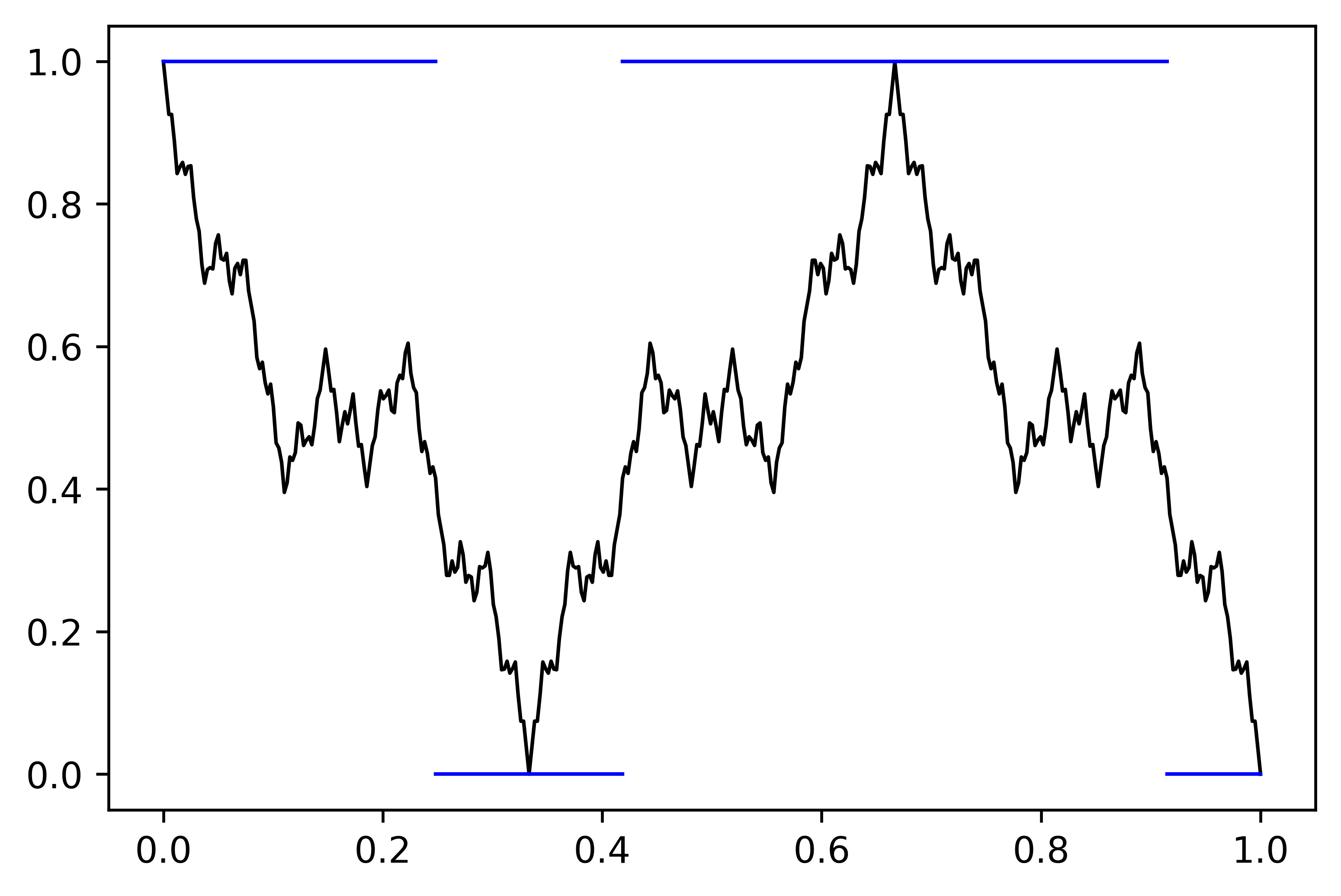}
\end{subfigure}
\begin{subfigure}{.49\textwidth}
	\centering
	\includegraphics[width=1\linewidth]{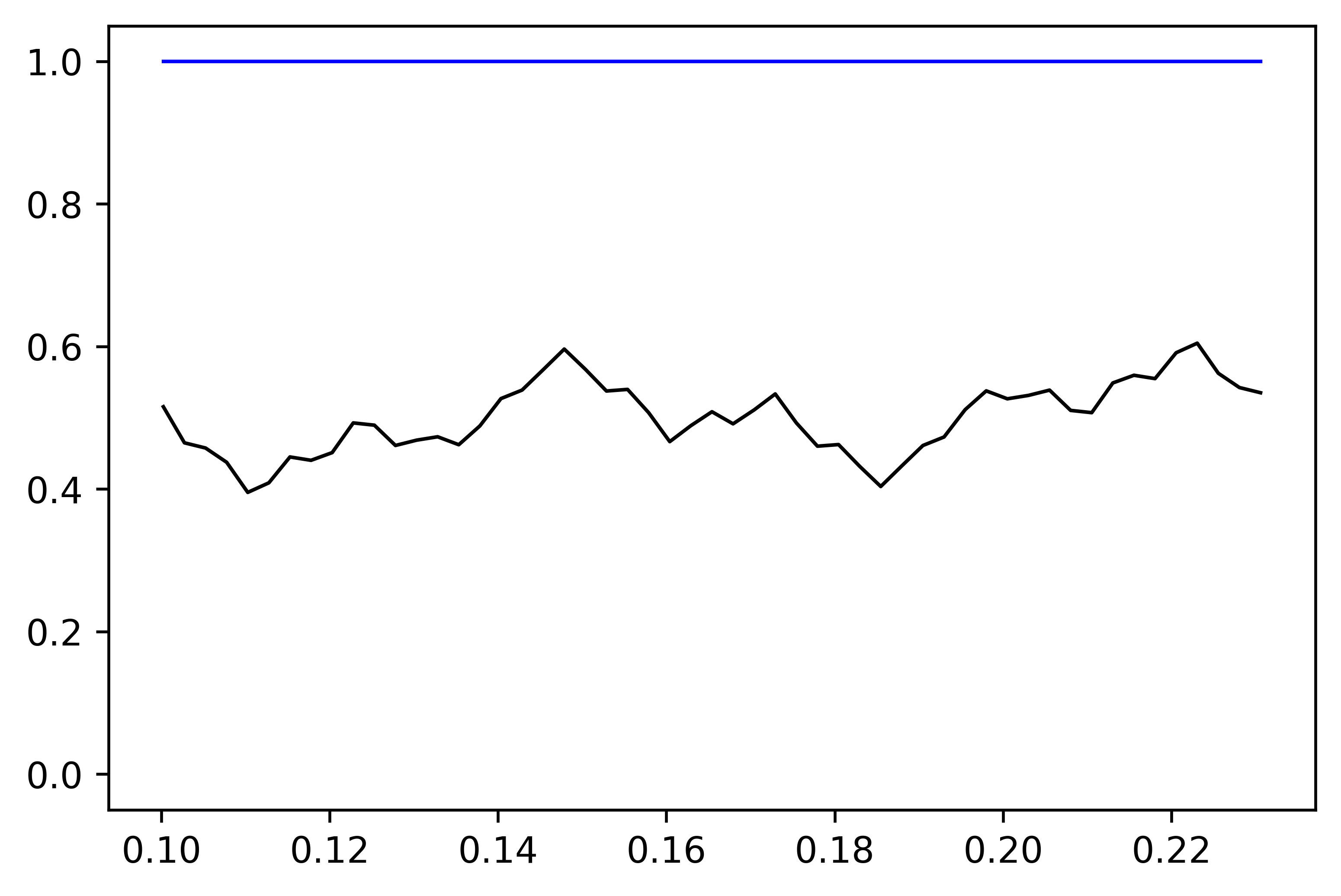}
\end{subfigure}
	\caption{Comparison of different approaches in the segmentation of a Weierstrass type function by 1-D Chan-Vese {(original signal in black; in colours the result of the segmentation).}
	\textit{Top:} GD based approach.  \textit{Bottom:} Trivial approach.
	\textit{Left:} Result in $[0,1]$.  \textit{Right:} Result {zoomed} in $[0.1, 0.23]$.}
	\label{fig:weierstrass}
\end{figure}
\end{ex}

{We note how the GD based approach suffers from the variations of the signal, {thus} affecting to the performance of the alternating Chan-Vese scheme. In contrast, the trivial approach, based on properties $(a)$ and $(b)$, provides an adequate minimizer approximation. Moreover, {we} note that even {if we chose} a particular case of {an} algorithm based on GD, this type of behaviour is a trend in {any of} these algorithms. Therefore, the use of the scheme presented in this paper is beneficial in situations where the application of GD (or variants) gives a wrong segmentation of the signal, as shown in the figure above.}

\appendix

\section{Appendix}
In this Appendix, we show that the functional $F_\varepsilon$ always  has  a minimizer regardless of the dimension of the domain. Note that, for $\varepsilon\leq {\frac{1}{4\lambda}}$, in the $1$--dimensional case, according to Remark 3,  existence of minimizers follows directly from existence of minimizers to Chan-Vese's functional. Here, we give a direct proof for any $\varepsilon>0$, which, in turn, provides an alternative proof of existence of solutions to \eqref{chan-vese}. Existence of minimizers will be shown through the study of the following auxiliary energy: $G_\varepsilon: (L^2(0,1))^3\to [0,\infty]$,
\begin{equation*}
 G_\varepsilon (u,v_1, v_2):=  F_\varepsilon (u,v_1, v_2) + \int_\Omega \left(\mathbb{I}_{[0,1]}(v_1) + \mathbb{I}_{[0,1]}(v_2)\right)\,dx\,.
\end{equation*}

Existence of minimizers to functional $G_\varepsilon$ is guaranteed by the Direct Method in the Calculus of Variations since the functional is easily seen to be lower semicontinuous in $L^2(\Omega)$ and coercive. Now, we relate both functionals $F_{\varepsilon}$ and $G_\varepsilon$ in the following result:

\begin{lem}\label{lem:truncation}
	Let $\left(u,v_1, v_2\right)\in \left(L^2(\Omega)\right)^3$. Then,
	\begin{equation}\label{FwithT}
	F_\varepsilon\left(u,T\left(v_1\right),T\left( v_2\right)\right)\leq  F_\varepsilon\left(u,v_1,v_2\right)\,,
	\end{equation}
	where $T$ is the truncation function in $[0,1]$; i.e. $T(t):=(t-1)_+-t_-$.
\end{lem}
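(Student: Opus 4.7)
The plan is to bound $F_\varepsilon(u, T(v_1), T(v_2))$ from above by $F_\varepsilon(u, v_1, v_2)$ term by term. Observe that the contributions $|Du|(\Omega)$ and $\int_\Omega \mathbb{I}_{[0,1]}(u)\,dx$ do not involve $v_1, v_2$ and are therefore unchanged. What remains is to control (a) the total variation penalties $\frac{1}{\varepsilon}|Dv_i|(\Omega)$ and (b) the fidelity integral after the replacements $v_i \mapsto T(v_i)$.

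For (a), the key point is that $T$ is $1$-Lipschitz on $\R$. By the standard BV chain rule for composition with Lipschitz functions, if $v_i \in BV(\Omega)$ then $T(v_i) \in BV(\Omega)$ and $|DT(v_i)|(\Omega) \le \mathrm{Lip}(T)\,|Dv_i|(\Omega) = |Dv_i|(\Omega)$. If instead $v_i \in L^2(\Omega)\setminus BV(\Omega)$, then by the convention introduced for $\Phi$ one has $|Dv_i|(\Omega) = +\infty$, so the inequality is trivial and $F_\varepsilon(u, v_1, v_2) = +\infty$ already on the right-hand side.

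For (b), I would first reduce to the case $u \in [0,1]$ a.e.\ in $\Omega$: otherwise the indicator term makes both sides of \eqref{FwithT} equal to $+\infty$ and the claim is vacuous. Under this reduction, the weights $u$ and $1-u$ are nonnegative a.e., so it suffices to prove the pointwise bounds $(T(v_i) - f)^2 \le (v_i - f)^2$ a.e.\ in $\Omega$, for $i=1,2$. Since $T$ is the metric projection of $\R$ onto the closed convex set $[0,1]$, and in the Chan--Vese setting the datum $f$ takes values in $[0,1]$ (which is consistent with the constraint $c_1, c_2 \in [0,1]$ and is the standard normalization), one has $T(f) = f$ and hence $|T(v_i) - f| = |T(v_i) - T(f)| \le |v_i - f|$ a.e.; squaring and integrating gives the required bound on the fidelity integral.

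Summing the contributions of (a) and (b) with the unchanged terms yields \eqref{FwithT}. The main (and essentially only) subtle point I expect is the implicit normalization $f \in [0,1]$ needed in step (b): without it, projecting $v_i$ onto $[0,1]$ could move it further from $f$ at points where $f$ lies outside $[0,1]$, and the pointwise comparison would fail. Since this normalization is standard in Chan--Vese type models, the argument goes through cleanly.
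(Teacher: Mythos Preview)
Your proof is correct and follows essentially the same route as the paper's: reduce to $u\in[0,1]$ a.e.\ so the indicator term is finite and the weights $u,\,1-u$ are nonnegative, use the $1$-Lipschitz chain rule in $BV$ to handle the total variation penalties, and exploit $f\in[0,1]$ together with the nonexpansiveness of the truncation to get the pointwise fidelity bound $(T(v_i)-f)^2\le (v_i-f)^2$. The only difference is presentational---you justify the pointwise bound via the metric-projection property $|T(v_i)-T(f)|\le|v_i-f|$ with $T(f)=f$, whereas the paper simply asserts it---and you correctly flag the reliance on the normalization $f\in[0,1]$, which the paper uses implicitly.
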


\begin{proof}
	Firstly note that we can suppose that $u(\Omega)\in [0, 1]$ a.e.  (otherwise, both terms in the inequality are equal to $+\infty$). Now, we observe that the following inequality is easily seen to be true (since $f(\Omega)\in [0,1]$):
		\begin{equation*}
		 			  (T(v_i))-f)^2\leq (v_i -{f})^2\,, \qquad \forall i\in {1,2}.
	\end{equation*}
	In consequence,
	\begin{align}\label{Tv}
	\nonumber &\int_\Omega \left(u\left(T(v_1)-{f}\right)^2 + (1-u)\left(T(v_2)-{f}\right)^2\right)\,dx \leq\\
	\leq &\int_\Omega \left(u\left(v_1-f\right)^2 + (1-u)\left(v_2-{f}\right)^2\right)\,dx\,.
	\end{align}
	
	Moreover, applying the Chain Rule in the composition of a $BV$ function and a Lipschitz function ({\cite[Theorem 101]{Ambrosio}}), we have
	\vspace{0.25cm}
	\begin{equation}\label{chain rule}
	 \left|D\left(T\left(v_i\right)\right)\right|(\Omega) \leq \left|Dv_i\right| (\Omega)\,, \qquad \forall i\in\{1,2\}\,.
	\end{equation}
	Therefore, by \eqref{Tv} and \eqref{chain rule} we conclude \eqref{FwithT}.
\end{proof}

\medskip
{ \begin{prop} For any $\varepsilon>0$, the functional $F_\varepsilon$ has a minimizer $(u,v_1,v_2)\in BV(\Omega)^3$.
\end{prop}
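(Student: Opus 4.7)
The plan is to invoke the Direct Method on the auxiliary functional $G_\varepsilon$ (for which compactness is straightforward because the indicator functions give $L^\infty$-bounds for free) and then transfer the minimizer to $F_\varepsilon$ using Lemma \ref{lem:truncation}.

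First I would verify the hypotheses of the Direct Method for $G_\varepsilon$. Let $(u_n, v_1^n, v_2^n)$ be a minimizing sequence. Finite $G_\varepsilon$-energy forces $u_n, v_1^n, v_2^n \in [0,1]$ a.e.\ (through the three indicator terms), which in turn yields uniform bounds on $\|u_n\|_\infty, \|v_1^n\|_\infty, \|v_2^n\|_\infty$, together with uniform BV-bounds coming from the total variation terms. By the BV compactness theorem, up to a subsequence the three sequences converge in $L^1(\Omega)$ (and a.e.) to functions $u, v_1, v_2 \in BV(\Omega)$ taking values in $[0,1]$; the $L^\infty$-bound then upgrades the convergence to $L^2(\Omega)$. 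Lower semicontinuity of the total variation on $BV$ handles the first two terms, the indicator terms pass to the limit by Fatou, and the fidelity term $\lambda\int_\Omega u(v_1-f)^2 + (1-u)(v_2-f)^2\, dx$ is in fact continuous along the sequence by dominated convergence since all factors are uniformly bounded. Hence $(u, v_1, v_2)\in BV(\Omega)^3$ minimizes $G_\varepsilon$.

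Next I would promote this to a minimizer of $F_\varepsilon$. Observe that $F_\varepsilon \leq G_\varepsilon$ pointwise on $(L^2(\Omega))^3$, with equality whenever $v_1, v_2 \in [0,1]$ a.e. Since our $G_\varepsilon$-minimizer already satisfies $v_1, v_2\in [0,1]$, we have $F_\varepsilon(u,v_1,v_2) = G_\varepsilon(u,v_1,v_2)$. Now take any competitor $(u', v_1', v_2')$ with finite $F_\varepsilon$-energy; the indicator term inside $F_\varepsilon$ forces $u'\in [0,1]$ a.e. Applying Lemma \ref{lem:truncation} and noting that $T(v_i')\in[0,1]$, one obtains
$$
F_\varepsilon(u,v_1,v_2) = G_\varepsilon(u,v_1,v_2) \leq G_\varepsilon(u', T(v_1'), T(v_2')) = F_\varepsilon(u', T(v_1'), T(v_2')) \leq F_\varepsilon(u', v_1', v_2'),
$$
so $(u,v_1,v_2)$ is a minimizer of $F_\varepsilon$ in $BV(\Omega)^3$.

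The only mildly delicate point is the lower semicontinuity of the mixed fidelity term $u(v_1-f)^2+(1-u)(v_2-f)^2$ along a minimizing sequence. This is handled smoothly here precisely because the uniform $L^\infty$-bounds come for free from the indicator functions inside $G_\varepsilon$, which is exactly why the detour through the auxiliary functional is natural; the truncation lemma then makes the passage from $G_\varepsilon$ back to $F_\varepsilon$ costless.
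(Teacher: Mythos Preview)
Your proof is correct and follows exactly the paper's strategy: apply the Direct Method to the auxiliary functional $G_\varepsilon$ (where the added indicator terms on $v_1,v_2$ provide the uniform $L^\infty$-bounds needed for compactness and lower semicontinuity), and then use Lemma \ref{lem:truncation} to conclude that minimizers of $G_\varepsilon$ and $F_\varepsilon$ coincide. You have simply spelled out the details that the paper leaves implicit.
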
}
\begin{proof}{ The proof follows directly from Lemma \ref{lem:truncation} since} we obtain that
	\begin{equation*}
	\text{$\left(u,v_1, v_2\right)$ is a minimizer of $F_\varepsilon$ $\longleftrightarrow \quad $ it is a minimizer of $G_\varepsilon$}\,.
	\end{equation*}
%

Therefore, by the existence of minimizers to $G_\varepsilon$ we can conclude that $F_\varepsilon$ admits (at least) one minimizer (which moreover belongs to $(BV(\Omega))^3$).
\end{proof}

\medskip

We finish this Appendix with the proof of the

\smallskip

{\noindent \bf{Claim}:} Let $h:\R\to \R$ be a Lipschitz non decreasing function. Then, the following { equality} holds:
  \begin{equation*}
    \langle v, h(u)\rangle= |Dh(u)|(\Omega),\quad
  \forall v\in \partial\Phi(u).\end{equation*}

\begin{proof}
  Since this Claim is stated in the multidimensional case, we need to recall (see \cite{Mazon}) that $v\in\partial\Phi(u)$ if, and only if, there exists a vector field $\z$ with $||\z||_{L^\infty(\Omega)}\leq 1$, ${\rm div }{\z}\in L^2(\Omega)$ such that $v=-{\rm div} \z$, $[\z,\nu^\Omega]=0$ and $$\langle v, u\rangle=|D u|(\Omega)=\int_\Omega d(\z,Du)=\int_\Omega \theta(\z,Du)\,d|Du|,$$ with $(\z,Du)$ being the Radon measure defined by (see \cite{Anzelloti} for precise definitions and results here stated)$$(\z,Du)(\varphi)=-\int_\Omega u\varphi {\rm div} \z\,dx-\int_\Omega u \z\cdot \nabla\varphi\,dx\,,\quad {\rm for \ }\varphi\in \mathcal D(\Omega),$$ $\theta(\z,Du)$ being the Radon--{Nikodym} derivative of $(\z,Du)$ over $|Du|$ and $[\z,{\nu^\Omega}]$ being the weak normal trace of $\z$ on the boundary. Note that this implies, in particular, that \begin{equation}
    \label{zDu}\theta(\z,Du)=1\,,\quad |Du|-{\rm a.e.}
  \end{equation}

  Since $h$ is Lipschitz, by Chain's rule, we have that $h(u)\in BV(\Omega)$. Suppose now that $h\in C^1(\R)$ and $h$ is increasing. By \cite[Proposition 2.8]{Anzelloti}, we have that $\theta(\z,Dh(u))=\theta(\z,Du)$, $|Du|$-a.e. {Observe that in this case, the measure $|Dh(u)|$ is absolutely continuous with respect to $|Du|$ and vice versa. Then, a property holds $|Du|$-a.e. iff it holds $|Dh(u)|$ a.e.} Therefore  by integration by parts (see \cite{Anzelloti}), we obtain \begin{eqnarray}\label{claim_final}
 \nonumber \langle v, h(u)\rangle &=&-\int_\Omega h(u){\rm div }\z\,dx=\int_\Omega d(\z,Dh(u))\\ \nonumber &=&\int_\Omega \theta(\z,Dh(u))\,d|Dh(u)| = \int_\Omega \theta(\z,Du)\,d|Dh(u)|\\ &\stackrel{\eqref{zDu}}=& |Dh(u)|(\Omega).
   \end{eqnarray}
   In the general case, we approximate $h$ by a sequence of $C^1$ increasing functions $h_n(t):=h\star\rho_{\frac{1}{n}}{(t)}+\frac{t}{n}$, $0\leq \rho_{\frac{1}{n}}$ being a symmetric mollifier. Therefore, it is easy to check that $h_n(u)$ strictly converges to $h(u)$ and we finish the proof {by using \eqref{claim_final}}.
\end{proof}

%
%
%
%

\end{document}